\pgfplotsset{compat=1.17}
\theoremstyle{plain}
\newtheorem{theo}{Theorem}[section]
\newtheorem{lem}{Lemma}[section]
\theoremstyle{remark}
\newtheorem{defi}{Definition}[section]
\newtheorem{remark}{Remark}[section]
\def \b{\beta}
\def \om{\omega}
\def \omij{\omega_{i,j}}
\def \R{\mathbb{R}}
\def \P{\mathbb{P}}
\def \Z{\mathbb{Z}}
\def \E{\mathbb{E}}
\def \Geo{\text{Geo}}
\def \Exp{\textup{Exp}}
\def \Zc{\mathcal{Z}}
\def \Wc{\mathcal{W}}
\def \Bc{\mathcal{B}}
\def \Gc{\mathcal{G}}
\begin{document}

\begin{frontmatter}
\title{Universality of directed polymers in the intermediate disorder regime}

\begin{aug}
\author[A]{\fnms{Julian}~\snm{Ransford}\ead[label=e1]{julian.ransford@mail.utoronto.ca}}
\address[A]{Department of Mathematics, University of Toronto\printead[presep={,\ }]{e1}}

\end{aug}

\begin{abstract}
In \cite{krish-qua18}, Krishnan and Quastel showed that the fluctuations of Sepp\"al\"ainen's log-gamma polymer converge in law to the Tracy--Widom GUE distribution in the intermediate disorder regime. This regime corresponds to taking the inverse temperature $\b$ to depend on the length of the polymer $2n$, with $\b=n^{-\alpha}$ for some $\alpha<1/4$. They also conjectured that this should hold for directed polymers with general i.i.d weights. We prove that their conjecture is true for any $1/5<\alpha<1/4$.
\end{abstract}

\begin{keyword}[class=MSC]
\kwd[Primary ]{60K37}
\kwd[; secondary ]{60K35}
\end{keyword}

\begin{keyword}
\kwd{Directed polymers}
\kwd{log-gamma polymer}
\kwd{Lindeberg method}
\kwd{Tracy--Widom distribution}
\kwd{KPZ universality}
\kwd{Intermediate disorder regime}
\end{keyword}

\end{frontmatter}

\section{Introduction}

The directed polymer was introduced in the physics literature by Huse and Henley during the 80's in \cite{huse-hen85} to model interactions in long molecule chains with impurities. Since then, its study has been taken up by both physicists and mathematicians and it has become one of the central models conjectured to be in the KPZ universality class. See \cite{com-shiga-yosh04} for a survey on the topic.

The model can be described as follows. Let $\xi_{i,j}$, with $i,j \in \Z_{\ge 0}$ be a collection of independent random variables, and let $\b>0$ be a parameter, which is commonly referred to as the \textit{inverse temperature}. We define the \textit{partition function} by
$$Z_{n,\b}=\sum_{\pi: (0,0) \to (n,n)} \prod_{i=0}^{2n} e^{\b \xi_{\pi(i)}}.$$
Here the sum is taken over all up-right paths $\pi$ which start at $(0,0)$ and end at $(n,n)$, that is the set of functions
$$\pi: \{0,1, \dots, 2n\} \to \Z_{\ge 0}^2$$
such that $\pi(0)=(0,0)$, $\pi(2n)=(n,n)$ and for each $i$, $\pi(i+1)-\pi(i)=(1,0)$ or $(0,1)$. See Figure \ref{up-right and Bernoulli paths}. The \textit{free energy} is $\log Z_{n,\b}$, and the \textit{polymer measure} is the random probability measure $\P_{n,\b}$ on up-right paths defined by
$$\P_{n,\b}(\pi)=\frac{\prod_{i=0}^{2n} e^{\b \xi_{\pi(i)}}}{Z_{n,\b}}.$$
The central problem with the directed polymer is to understand the behaviour of the free energy and the polymer measure as a function of $\b$ and as $n \to \infty$.

Unlike simple random walk, the polymer measure exhibits ``superdiffusive'' behaviour, which loosely means that the transversal fluctuations of a $\P_{n,\b}$ distributed path are of order $n^{\zeta}$ for some large fixed exponent $\zeta$. When $\b=0$, the polymer measure is random walk bridge measure, and $\zeta=1/2$. For $\b>0$, the conjecture is that $\zeta=2/3$. This was verified numerically in Huse and Henley's original paper \cite{huse-hen85}, but it has only been proven for the very special log-gamma polymer of Sepp\"al\"ainen \cite{bar-cor-dim21,bor-cor-rem13,seppa12}, and for general weights in thin rectangles \cite{auf-bai-cor12}.

As for the free energy, it too is expected to have fluctuations of order $n^{\chi}$ for some fixed exponent $\chi$. The conjecture here is that $\chi=1/3$. Moreover, the limiting distribution of the scaled fluctuations is conjectured to be Tracy--Widom GUE. This has only been confirmed for the log-gamma polymer again \cite{bar-cor-dim21,bor-cor-rem13,seppa12} as well as for certain models of last passage percolation (i.e the limit $\log Z_{n,\b}/\b$ as $\b \to \infty$) \cite{BDJ99,johan00,johan00-2}. All of these and other models with the same fluctuation exponents ($\zeta=2/3,\chi=1/3$) are conjectured to be in the Kardar--Parisi--Zhang universality class. See \cite{cor12} for a survey on KPZ theory.

In this paper, we consider the \textit{intermediate disorder regime} of the directed polymer. This consists in taking $\b=\b_n$ depending on $n$ and such that $\b_n \to 0$ as $n \to \infty$. This regime was introduced by Alberts, Khanin and Quastel in \cite{AKQ14}, and they showed in \cite{AKQ14,AKQ14-2} that for $\b_n \sim \b n^{-1/4}$, the directed polymer converges to a non-trivial limit known as the \textit{continuum directed polymer}. 

If $\b_n=n^{-\alpha}$ for some $\alpha>1/4$, then the limiting fluctuations of the free energy are Gaussian. This can be seen by doing a Taylor expansion of $\log Z_{n,\b}$ centred at $\b=0$:
$$\log Z_{n,\b}=\log \binom{2n}{n}+\left(\sum_{i+j \le n} \P((i,j) \in S) \xi_{ij}\right)\b+ \dots,$$
for $S$ uniformly distributed on up-right paths from $(0,0)$ to $(n,n)$. One can check that the variance of the coefficient of the $k$-th order term is of order $n^{k/2}$. So with $\alpha>1/4$, the terms of order 2 and higher vanish in the limit, and the order 1 term is a weighted sum of independent random variables, so it has Gaussian fluctuations by the central limit theorem. For $\alpha=1/4$, the terms of higher order do not vanish; in fact, each term in the expansion converges in distribution individually, and Alberts, Khanin and Quastel showed that the entire sum converges.

When $\alpha<1/4$, it was conjectured that the directed polymer should fall back in the KPZ regime and that the fluctuations are of order $\b^{4/3}n^{1/3}$ with a Tracy--Widom GUE limit. This was proven by Krishnan and Quastel in \cite{krish-qua18} for the log-gamma polymer as well as for polymers whose weights match at least 9 moments with those of the log-gamma. In this paper, we show that this last requirement is unnecessary, and that the fluctuations are indeed of this order with Tracy--Widom GUE limit for $1/5<\alpha<1/4$ and for any choice of weights with exponential moments.

\section{Main results}

We consider collections of independent random variables $\omij(\b)$, $i,j \ge 0$, which are parametrized by the inverse temperature $\b$. The usual directed polymer corresponds to taking $\omij(\b)=e^{\b \xi_{i,j}}$ where the $\xi_{i,j}$ are independent random variables. We allow this more complicated dependence on $\b$ in order to include the log-gamma polymer, which is not given by a standard polymer. A \textit{path} is a function $\pi: \Z_{\ge 0} \to \Z$ such that $\pi(0)=0$ and $\pi(i)-\pi(i-1)\in \{0,1\}$ for all $i$. For an inverse temperature $\b>0$, we define the \textit{partition function} as
$$Z_{n,\b}=\sum_{\pi} \prod_{i=0}^{2n} \om_{i,\pi(i)}(\b).$$
Here the sum is taken over all paths of length $2n$ such that $\pi(2n)=n$. For convenience, we translated the polymer in terms of Bernoulli paths instead of up-right paths. There is an obvious bijection between the set of up-right paths from $(0,0)$ to $(n,n)$ and the set of Bernoulli paths of length $2n$ started at 0 and ending at $n$, so this transformation does not change anything. This translation however will make a lot of the expressions easier to write down in terms of intersections of Bernoulli walks.

\begin{figure}
\centering
	\begin{tikzpicture}
	\draw[very thin,color=gray] (0,0) grid (4,4);
	\draw[->] (0,0) -- (4.3,0) node[right] {};
	\draw[->] (0,0) -- (0,4.3) node[above] {};
	
	\foreach \i in {0,1,...,4}{
	\coordinate[label=below:$\i$] (x) at (\i,0);
	}
	
	\foreach \i in {0,1,...,4}{
	\coordinate[label=left:$\i$] (x) at (0,\i);
	}
	
	\filldraw[black]  (4,4) circle (3pt);
	
	\draw[-,color=black,line width=0.75mm] (0,0) -- (1,0) node[right]{};
	\draw[-,color=black,line width=0.75mm] (1,0) -- (1,2) node[right]{};
	\draw[-,color=black,line width=0.75mm] (1,2) -- (3,2) node[right]{};
	\draw[-,color=black,line width=0.75mm] (3,2) -- (3,3) node[right]{};
	\draw[-,color=black,line width=0.75mm] (3,3) -- (4,3) node[right]{};
	\draw[-,color=black,line width=0.75mm] (4,3) -- (4,4) node[right]{};	
	
	\end{tikzpicture}
	\begin{tikzpicture}
	\draw[very thin,color=gray] (0,0) grid (8,4);
	\draw[->] (0,0) -- (8.3,0) node[right] {};
	\draw[->] (0,0) -- (0,4.3) node[above] {};
	
	\foreach \i in {0,1,...,8}{
	\coordinate[label=below:$\i$] (x) at (\i,0);
	}
	
	\foreach \i in {0,1,...,4}{
	\coordinate[label=left:$\i$] (x) at (0,\i);
	}
	
	\filldraw[black]  (8,4) circle (3pt);
	
	\draw[-,color=black,line width=0.75mm] (0,0) -- (1,0) node[right]{};
	\draw[-,color=black,line width=0.75mm] (1,0) -- (3,2) node[right]{};
	\draw[-,color=black,line width=0.75mm] (3,2) -- (5,2) node[right]{};
	\draw[-,color=black,line width=0.75mm] (5,2) -- (6,3) node[right]{};
	\draw[-,color=black,line width=0.75mm] (6,3) -- (7,3) node[right]{};
	\draw[-,color=black,line width=0.75mm] (7,3) -- (8,4) node[right]{};	
	
	\end{tikzpicture}

\caption{On the left: an up-right path from (0,0) to (4,4), and on the right the corresponding Bernoulli path from (0,0) to (8,4)}
\label{up-right and Bernoulli paths}
\end{figure}
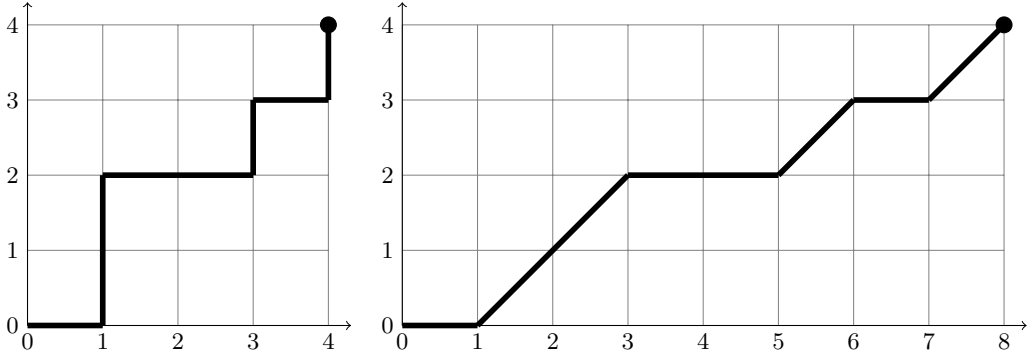

If the weights are all positive, we can define the corresponding \textit{polymer measure} $\P_{n,\b}$ on paths as follows:
$$\P_{n,\b}(\pi)=\frac{\prod_{i=0}^{2n} \om_{i,\pi(i)}(\b)}{Z_{n,\b}}.$$
We are primarily interested in the \textit{intermediate disorder regime}, which corresponds to taking $\b=\b_n \to 0$ as $n \to \infty$. In view of the discussion in the introduction, we will consider the case when $\b=n^{-\alpha}$ for some fixed $\alpha<1/4$. Our main result is the following.

\begin{theo}\label{TW thm}
Let $\alpha \in (1/5,1/4)$, and set $\b=n^{-\alpha}$. Let $\xi_{i,j}, i,j \ge 0$ be i.i.d random variables with variance $\sigma^2$ and an exponential moment, that is $\E(e^{c\xi_{1,1}})<\infty$ for some $c>0$, and let $Z_{n,\b}$ be the partition function for the corresponding directed polymer. Then
$$\frac{\log Z_{n,\b}-a_n}{(4\sigma^4\b^4n)^{1/3}} \xrightarrow{d} \text{TW}_{GUE}$$
where
$$a_n=2n\left(\log\psi(\b)+\log 2-\frac{\sigma^4 \b^4}{3}\right)$$
and $\psi$ is the moment generating function of the $\xi_{i,j}$'s.
\end{theo}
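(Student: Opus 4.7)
My plan is to reduce to the log-gamma case of Krishnan--Quastel via a Lindeberg exchange argument. Pick a reference log-gamma polymer $Z^{\text{LG}}_{n,\b}$ whose weights match $e^{\b \xi_{i,j}}$ in mean and variance; Krishnan--Quastel then gives $(\log Z^{\text{LG}}_{n,\b} - a^{\text{LG}}_n)/s_n \to \mathrm{TW}_{GUE}$ with $s_n = (4\sigma^4 \b^4 n)^{1/3}$ and some log-gamma-dependent centering $a^{\text{LG}}_n$. Two tasks then remain: to transport the limit law from $Z^{\text{LG}}$ to $Z_{n,\b}$, and to verify that the explicit centering $a_n = 2n(\log \psi(\b) + \log 2 - \sigma^4 \b^4/3)$ in the theorem agrees with $a^{\text{LG}}_n$ up to $o(s_n)$ — the second part reduces to a direct Taylor expansion of $\E[\log Z^{\text{LG}}]$ in $\b$, exploiting that the first two cumulants of $\log \omega^{\text{LG}}_{i,j}(\b)$ and $\b \xi_{i,j}$ coincide by our choice of the log-gamma parameter.

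For the transport step, enumerate the $O(n^2)$ sites and let $Z^{(k)}$ be the partition function in which the first $k$ weights are the target $e^{\b \xi_{i,j}}$ and the remaining ones are log-gamma. For any smooth bounded test function $F$ I need
\[
\sum_{k} \bigl|\E F((\log Z^{(k+1)} - a_n)/s_n) - \E F((\log Z^{(k)} - a_n)/s_n)\bigr| \to 0.
\]
A single swap touches only $\omega$ at one site $(i,j)$; writing $Z = A + \omega B$ with $A, B$ independent of $\omega$ and Taylor-expanding in $\omega$ around the common mean $m$, the matched mean and variance kill the linear and quadratic terms in expectation, leaving a cubic residual. The identity $\partial_\omega^j \log Z = (-1)^{j+1}(j-1)!\,(q_{i,j}/\omega)^j$, with $q_{i,j} = \omega B / Z$ the quenched polymer visit probability at $(i,j)$, expresses the deterministic factor of this residual as a polynomial in $q_{i,j}$ multiplied by derivatives of $F$ of size at most $s_n^{-j}$.

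The core estimate then bounds the total Lindeberg error by $s_n^{-1} \b^3 \sum_{(i,j)} \E[q_{i,j}^3]$ modulo lower-order contributions. Here $\sum_{(i,j)} \E[q_{i,j}^3]$ is the expected triple-intersection count of three i.i.d.\ samples from $\P_{n,\b}$, which I would control by the corresponding Bernoulli-bridge intersection count together with $L^p$ comparisons showing that the polymer is a small perturbation of the bridge in the intermediate regime. The interplay of this intersection bound with $s_n^{-1}$ and the cubic cost $\b^3$ is what pins the threshold at $\alpha > 1/5$.

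The main obstacle will be carrying out the remainder analysis cleanly in the presence of unbounded weights while propagating the intersection estimates along the entire interpolation. Since $e^{\b \xi}$ has only exponentially decaying tails, the Taylor expansion must be paired with a truncation of the $\xi_{i,j}$ on an event of overwhelming probability afforded by the exponential moment hypothesis; the truncated remainders can then be controlled directly and the untruncated contribution absorbed separately. One also needs the polymer-measure visit probability estimates to hold uniformly along the sequence $Z^{(k)}$, where general and log-gamma weights coexist; this should reduce, via a Girsanov-type change of measure, to the corresponding bounds for the pure log-gamma (respectively pure general-weight) polymer. These two ingredients together are what permit the Lindeberg scheme to run with only two matching moments, rather than the nine demanded by the original Krishnan--Quastel argument.
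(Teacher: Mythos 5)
Your overall strategy coincides with the paper's: a Lindeberg exchange from the given weights to a log-gamma reference polymer with two matched moments, killing the linear and quadratic Taylor terms so that the error is a cubic residual of size roughly $s_n^{-1}\b^3$ times the expected triple intersection count of polymer-sampled paths. However, there is a genuine gap at the step where you propose to control $\sum_{(i,j)}\E[q_{i,j}^3]$ by ``$L^p$ comparisons showing that the polymer is a small perturbation of the bridge in the intermediate regime.'' That comparison fails globally precisely in the regime of interest. For $\alpha<1/4$, the partition function does \emph{not} concentrate: $\log Z_{n,\b}-\E\log Z_{n,\b}$ has fluctuations of order $\b^{4/3}n^{1/3}=n^{1/3-4\alpha/3}\to\infty$, and the second-moment computation gives $\text{Var}(Z_{n,\b}/\E Z_{n,\b})\le (1+C\b^2)^{C\sqrt n}-1$, which diverges once $2\alpha<1/2$. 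So the quenched visit probabilities $q_{i,j}$ are not close, in any useful $L^p$ sense, to their Bernoulli-bridge analogues, and your core estimate cannot be transferred. Moreover the threshold $\alpha>1/5$ cannot come from balancing $s_n^{-1}\b^3$ against a bridge-type triple-intersection count of order $\log n$: that balance gives $s_n^{-1}\b^3\log n=n^{-5\alpha/3-1/3}\log n\to 0$ for \emph{every} $\alpha>0$, which is vacuous. The constraint in fact originates elsewhere.

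The paper's key idea, absent from your proposal, is \emph{localization}: factor $Z_{n,\b}=\mathcal{Z}\cdot Z_\mu$, where $\mathcal{Z}$ collects the weights outside a strip of width $n_0=\b^{-4/(1+4\delta)}$, and $Z_\mu$ is a local partition function over paths on $[a,b]$ drawn from the random measure $\mu$ induced by the exterior environment. On such a strip the effective inverse-temperature exponent relative to $n_0$ exceeds $1/4$, so $Z_\mu$ does concentrate, and with overwhelming probability (using the tail Condition 4 on valid weights) $\mu$ charges only endpoint pairs with slope in $[1/4,3/4]$; this reduces the intersection moments to those of bridges with controlled slopes, handled via a local CLT (Platonov) bridge-to-walk comparison, a first-return estimate, and second-moment bounds for triple intersections. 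Summing the per-strip Lindeberg errors over the $2n/n_0$ strips yields $C\b^3 n\log n/(s_n n_0)$, and making this $o(1)$ \emph{together with} the slope-control requirement $\b^s n=o(n_0)$ from Condition 4 is what forces $\alpha>1/5$. Without this localization the Lindeberg scheme cannot be closed. (Two peripheral points: the Girsanov-type change of measure you invoke for mixed weight configurations is not needed — validity is closed under arbitrary site-by-site combinations with uniform constants; and the truncation you anticipate for unbounded $e^{\b\xi}$ is exactly what Condition 4 of the paper's validity notion encodes and what supplies the $\alpha>1/5$ bound.)
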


The asymptotic free energy $a_n$ depends on the distribution of the weights, but only via the moment generating function. By expanding $\log \psi(\b)$ as a Taylor series, one can see that the $\b^5$ terms and higher can be ignored, since in the regime $1/5<\alpha<1/4$, we have that $n\b^5$ will be of smaller order then $\b^{4/3}n^{1/3}$. The Taylor coefficients of $\log \psi(\b)$ are given by the cumulants, and those depend only on the moments of the same order and lower, and so $a_n$ only depends on the first 4 moments of the $\xi_{i,j}$'s.

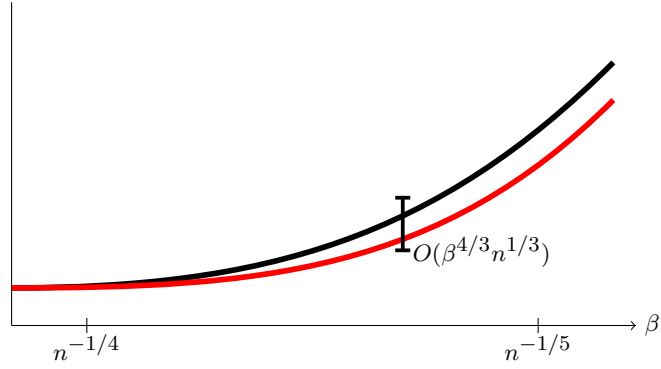
\begin{figure}
\centering
	\begin{tikzpicture}
	\draw[->] (0,0) -- (8.3,0) node[right] {$\beta$};
	\draw[-] (0,0) -- (0,4.3) node[above] {};
	
	\coordinate[label=below:$n^{-1/4}$] (x) at (1,0);
	\draw[-] (1,-0.1) -- (1,0.1);
	\coordinate[label=below:$n^{-1/5}$] (x) at (7,0);
	\draw[-] (7,-0.1) -- (7,0.1);
	
	
	\draw[-,color=black,line width=0.75mm] (0,0.5) to[out=0, in=225] (8,3.5) node[right]{};
	
	\draw[-,color=red,line width=0.75mm] (0,0.5) to[out=0, in=225] (8,3) node[right]{};
	
	\draw[-,color=black,line width=0.5mm] (5.2,1) -- (5.2,1.7) node[right, yshift=-20]{$O(\beta^{4/3}n^{1/3})$};
	\draw[-,color=black,line width=0.5mm] (5.1,1) -- (5.3,1);
	\draw[-,color=black,line width=0.5mm] (5.1,1.7) -- (5.3,1.7);

	\end{tikzpicture}

\caption{Fluctuations in the valid $\beta$ window. The black curve represents the theoretical asymptotic free energy (the $a_n$'s) and the red curve the free energy.}
\label{limit shape figure}
\end{figure}

The significance of this result is illustrated in Figure \ref{limit shape figure}. The fluctuations of the directed polymer are asymptotically Tracy--Widom GUE for an inverse temperature $\b$ in the range $(n^{-1/4+\epsilon},n^{-1/5-\epsilon})$, for every $\epsilon>0$. The key is that Theorem \ref{TW thm} is \textit{universal}, and does not depend on the choice of distributions. As discussed in the introduction, there is very little that is known about the directed polymer with general weights.

\begin{defi}
A collection of independent, parametrized random variables $\omij(\b)$, $i,j \ge 0$ is called \textit{valid} if it satisfies the following conditions for all $i,j \ge 0$ and for all sufficiently small $\b>0$:
\begin{enumerate}
\item $\omij(\b)>0$ almost surely.
\item $\E(\omij(\b))=1$.
\item There are constants $C_k$ such that
$$\E(|\omij(\b)-1|^k|)\le C_k \b^k$$
\item There are positive constants $c_1,c_2,c_3$ such that for all $0<s<1$ there is an $A(s)>0$ with
$$\P(e^{-c_1 \b^s} \le \omij(\b) \le e^{c_2 \b^s}) \ge 1-A(s) e^{-c_3\b^{s-1}}$$
for all sufficiently small $\b>0$.
\end{enumerate}
\end{defi}

\begin{remark}
It is sufficient for weights to be only defined along some countable sequence of parameters $\b$ converging to 0. This way all the different sets of weights for all parameters can be coupled on the same probability space.
\end{remark}
\begin{remark}
The specific form of Condition 4 is not extremely important. What matters is that $\omij(\b)$ should be approximately $e^{-c\b}$ for $\b$ sufficiently small, and the probability that this is not the case should go to 0 with $n$ faster than any polynomial (recall that we are considering $\b$ of the form $\b=n^{-\alpha}$).
\end{remark}
\begin{remark}\label{combination of valid weights}
If $\omij(\b)$ and $\omij'(\b)$ are two independent collections of valid weights, then any combination of the two sets of weights is also valid. More precisely, let $G:\Z^2 \to \{0,1\}$ be an arbitrary function, and define 
$$\omij''(\b)=
\begin{cases}
\omij(\b) & \quad \text{if } G(i,j)=0\\
\omij'(\b) & \quad \text{if } G(i,j)=1.
\end{cases}
$$ 
Then $\omij''(\b)$ is valid. In fact, it is easy to see that one can pick the constants in Conditions 3 and 4 so that those inequalities work uniformly over all choices of $G$.
\end{remark}

The main ingredient that goes in the proof of Theorem \ref{TW thm} is the following perturbation theorem.
\begin{theo}\label{perturbation thm}
Let $\alpha \in (1/5,1/4)$, and set $\b=n^{-\alpha}$. Let $\omij(\b)$ and $\omij'(\b)$ be independent valid collections of weights such that for each $i,j \ge 0$, and for every sufficiently small $\b$, $\E(\omij(\b)^2)=\E(\omij'(\b)^2)$. Then for a probability distribution $F$ on $\R$ and a sequence of real numbers $(a_n)$, we have
$$\frac{\log Z_{n,\b}-a_n}{\b^{4/3}n^{1/3}} \xrightarrow{d} F$$
if and only if
$$\frac{\log Z_{n,\b}'-a_n}{\b^{4/3}n^{1/3}} \xrightarrow{d} F.$$
\end{theo}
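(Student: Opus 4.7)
The natural strategy is a Lindeberg-type exchange argument: swap $\omij(\b)$ for $\omij'(\b)$ one site at a time and bound the accumulated change in the distribution of the rescaled log partition function. By standard weak-convergence arguments it suffices to prove that $\E[\phi(L_n)] - \E[\phi(L_n')] \to 0$ for every $\phi \in C_b^3(\R)$, where $L_n = (\log Z_{n,\b} - a_n)/(\b^{4/3}n^{1/3})$ and $L_n'$ is the primed analogue. Enumerate the $N = O(n^2)$ sites $(i,j)$ that can be touched by a length-$2n$ Bernoulli path as $s_1,\dots,s_N$, and for $0 \le k \le N$ let $\hat{\omega}^{(k)}$ be the hybrid collection using $\omij'(\b)$ at $s_1,\ldots,s_k$ and $\omij(\b)$ elsewhere. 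By Remark \ref{combination of valid weights} these are valid uniformly in $k$. Writing $Z_n^{(k)}$ and $L_n^{(k)}$ for the corresponding partition function and rescaled log, telescope
$$\E[\phi(L_n')] - \E[\phi(L_n)] = \sum_{k=0}^{N-1} \E\bigl[\phi(L_n^{(k+1)}) - \phi(L_n^{(k)})\bigr].$$

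For the $k$-th swap, factor $Z_n^{(k)} = \omega_{s_{k+1}}(\b)\, A_k + B_k$ where $A_k, B_k$ are measurable with respect to all weights other than those at site $s_{k+1}$, and hence are independent of $\omega_{s_{k+1}}(\b)$ and $\omega'_{s_{k+1}}(\b)$. Define
$$H_k(w) = \phi\!\left(\frac{\log(wA_k + B_k) - a_n}{\b^{4/3}n^{1/3}}\right)$$
and Taylor expand to second order with third-order Lagrange remainder around $w=1$. After conditioning on $(A_k, B_k)$, validity condition~2 gives $\E[\omega_{s_{k+1}}(\b)-1] = \E[\omega'_{s_{k+1}}(\b)-1] = 0$, while the hypothesis $\E[\omega^2] = \E[\omega'^2]$ gives equal second moments around $1$, so the linear and quadratic terms of the expansion cancel exactly. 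The per-swap contribution is therefore at most
$$\tfrac{1}{6}\E\!\bigl[\sup_{\xi}|H_k'''(\xi)|\bigr]\bigl(\E|\omega_{s_{k+1}}(\b)-1|^3 + \E|\omega'_{s_{k+1}}(\b)-1|^3\bigr) \le C\b^3\,\E\!\bigl[\sup_{\xi}|H_k'''(\xi)|\bigr]$$
by validity condition~3.

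A direct computation with $\ell_k(w) = (\log(wA_k+B_k)-a_n)/(\b^{4/3}n^{1/3})$ shows that the $j$-th derivative of $\ell_k$ equals $(-1)^{j-1}(j-1)!\,q_k(w)^j/\bigl(w^j\b^{4/3}n^{1/3}\bigr)$, where $q_k(w) = w A_k/(wA_k + B_k)$ is the polymer passage probability through $s_{k+1}$ under weight $w$. Expanding $H_k''' = \phi'''(\ell_k)(\ell_k')^3 + 3\phi''(\ell_k)\ell_k'\ell_k'' + \phi'(\ell_k)\ell_k'''$ and retaining the dominant cubic term gives $|H_k'''(w)| \le C\|\phi\|_{C^3}\,q_k(w)^3/(\b^4 n)$, valid on the event that all weights lie in $[e^{-c_1\b^s},e^{c_2\b^s}]$. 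Validity condition~4 implies the complementary event has super-polynomially small probability, and a union bound over the $O(n^2)$ sites shows that its contribution is $o(1)$. Summing over $k$ then yields
$$\bigl|\E[\phi(L_n')] - \E[\phi(L_n)]\bigr| \le \frac{C\|\phi\|_{C^3}}{\b n}\sum_{k=1}^{N}\E[q_k^3] + o(1).$$

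The main remaining obstacle is to show that $\sum_{k}\E[q_k^3] = o(\b n) = o(n^{1-\alpha})$. Since $\sum_k q_k = 2n+1$, the crude bound $q_k^3 \le q_k$ only gives $O(n)$, which is insufficient; what is needed is a moderate delocalization estimate of the form $\max_k q_k = o(n^{-\alpha/2})$ with sufficiently high probability, after which $\sum_k q_k^3 \le (\max_k q_k)^2 \sum_k q_k = o(n^{1-\alpha})$. The range $\alpha \in (1/5,1/4)$ is precisely where such a delocalization estimate can be established from the available probabilistic input on the polymer endpoint and path concentration in the intermediate disorder regime, which may be obtained by coupling to a reference valid polymer (such as the log-gamma one) and using the validity of the hybrids $\hat{\omega}^{(k)}$ uniformly in $k$. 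Once this delocalization is in hand, the bound above is $o(1)$ and the theorem follows.
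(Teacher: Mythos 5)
Your setup is right: Lindeberg exchange, site-by-site telescoping over the $O(n^2)$ sites in the light cone, third-order Taylor expansion of the test function applied to $\log Z_n$ with first- and second-order terms cancelling by moment matching, and validity Condition~4 to discard weights far from $1$. This matches the paper. However, there is a quantitative error in your bound and the key step is missing.

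First, the dominant piece of $H_k'''$ is not $\phi'''(\ell_k)(\ell_k')^3$. The scale factor $\sigma=\b^{4/3}n^{1/3}=n^{(1-4\alpha)/3}$ tends to $\infty$, so among the three terms of $H_k'''$ the largest weight of $q_k^3/w^3$ comes from $\phi'(\ell_k)\,\ell_k'''$, which carries $1/\sigma$, not from $\phi'''(\ell_k)(\ell_k')^3$, which carries $1/\sigma^3$. The correct per-swap cost is $C\b^3\,q_k^3/\sigma$, so the total error is controlled by $(\b^3/\sigma)\sum_k\E[q_k^3]$; you therefore need $\sum_k\E[q_k^3]=o(\sigma/\b^3)=o(n^{(1+5\alpha)/3})$, which is \emph{stronger} than the $o(\b n)=o(n^{1-\alpha})$ you wrote (for $\alpha<1/4$). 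The paper's expression \eqref{taylor bound 2} has exactly $C\b^3/\sigma$ in front for this reason. Your $1/(\b^4 n)$ denominator makes the problem appear easier than it is.

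Second, the proposal leaves the essential difficulty as an unproved assertion. Even if the delocalization $\max_k q_k=o(n^{-\alpha/2})$ were available, it gives $\sum_k q_k^3\le(\max_k q_k)^2\sum_k q_k=o(n^{1-\alpha})$, which for $\alpha\in(1/5,1/4)$ is larger than the genuinely required $o(n^{(1+5\alpha)/3})$; so your proposed input would not close the argument. More importantly, no such delocalization estimate is established or cited; it is not ``available probabilistic input'' in this regime. The paper's proof works very differently: it partitions $[0,2n]$ into strips of width $n_0\sim n^{4\alpha/(1+4\delta)}$, conditions on the weights outside each strip, and shows (Conditions~3 and~4 plus \eqref{bound for mu of S}) that on a high-probability event the induced measure $\mu$ on paths within the strip is a mixture of Bernoulli bridges whose slopes concentrate in $[1/4,3/4]$. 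It then estimates moments of intersection local times of such bridges (Lemmas \ref{bridge replace}--\ref{mgf lemma}) to obtain $\E|Z_\mu(\b)-1|^k\le C_k/n_0^{k\delta}$ and a per-strip bound $C\log n$ on the expected triple-intersection sum. Summing over $2n/n_0$ strips yields $O(n^{1-4\alpha/(1+4\delta)}\log n)$, which is indeed $o(\sigma/\b^3)$. The constraint $\alpha>1/5$ arises not from delocalization but from requiring $\b^s n\ll n_0$ in \eqref{inequality for mu}--\eqref{bound for mu of S} so that the strip conditioning is effective. This strip/bridge mechanism is the heart of the proof and is absent from your proposal.
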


In Section \ref{Tracy-Widom fluctuations section}, we will show that both the standard directed polymer and the log-gamma polymer (or at least suitably normalized versions of them) are valid. Since the fluctuations of the log-gamma polymer are known to converge to the Tracy--Widom GUE distribution, this will imply Theorem \ref{TW thm}. Another natural choice of weights is $\omij(\b)=1+\b\xi_{i,j}$ where the $\xi_{i,j}$'s are i.i.d with mean 0. This also turns out to be a valid set of weights, and Theorem \ref{TW thm} will hold as well for its free energy, but with $\log \psi(\b)$ removed from the $a_n$'s: $a_n=2n(\log2-\sigma^4 \b^4/3)$. The calculations are very similar to those for the standard directed polymer done in Section \ref{Tracy-Widom fluctuations section}, so we omit them.

The reason for which the lower exponent $\alpha>1/5$ comes up is due to a somewhat crude tail estimate for the partition function $Z_{n,\b}$. As the calculations at the end of Section \ref{perturbation proof section} show, Theorems \ref{TW thm} and \ref{perturbation thm} should hold with $2/17<\alpha<1/4$, and we conjecture that this is indeed the case. If we strengthen the assumptions of Theorem \ref{perturbation thm} to $\omij(\b)$ and $\omij'(\b)$ having the same first $k$ moments, then Theorems \ref{TW thm} and \ref{perturbation thm} should hold (with different $a_n$'s) for $\alpha>2/(3k+11)$; see Remark \ref{alpha for higher moments} at the end of Section \ref{perturbation proof section}.

\section{Ideas behind the proof}

The main tool in the proof of Theorem \ref{perturbation thm} is the Lindeberg method. Suppose that $g_n: \R^n \to \R$ is a sequence of functions and $X_1, X_2, \dots$ is a sequence of independent random variables for which we know $g_n(X_1, \dots, X_n) \xrightarrow{d} F$. The Lindeberg method consists in showing that $g_n(Y_1, \dots, Y_n)$ also converges in distribution to $F$ for a different sequence of independent random variables by estimating the error when one changes just one of the inputs of $g_n$ from the $X$ sequence to the $Y$ sequence. If the sum of all the errors after changing each input one by one is $o(1)$ in some metric that metrizes convergence in distribution, then we can conclude that $g_n(Y_1, \dots, Y_n) \xrightarrow{d} F$. This is typically done by expanding $\E(f \circ g_n(X_1, \dots, X_n))$ (here $f$ is some test function) as a Taylor series, and then using some sort of moment matching condition to cancel certain terms. In our case, $g_n$ will correspond to $\log Z_{n,\b}$.

Consider the polymer with weights $\omij(\b)$. For fixed $i$ and $j$, we introduce the following notation
\begin{eqnarray*}
W_n(i,j)&=&\sum_{\pi, (i,j) \in \pi} \prod_{\ell \neq i} \om_{\ell, \pi(\ell)}(\b)\\
V_n(i,j)&=&\sum_{\pi, (i,j) \notin \pi} \prod_{\ell=1}^n \om_{\ell, \pi(\ell)}(\b).
\end{eqnarray*}
The first sum is taken over the paths which go through the point $(i,j)$, but the term in the product corresponding to $\omij(\b)$ is removed. The sum for $V_n(i,j)$ is taken over the paths that do not go through $(i,j)$. We clearly have $Z_{n,\b}=V_n(i,j)+\omij(\b) W_n(i,j)$.

By virtue of \cite[Theorem 2.1]{bill99} and a standard density argument, it is enough to check that for all $C^3$ functions $f$ with bounded derivatives,
$$\lim_{n \to \infty} \E(f(X_n))=\lim_{n \to \infty} \E(f(Y_n))$$
in order to show that two sequences $(X_n)$ and $(Y_n)$ converge in distribution to the same thing. So let $f: \R \to \R$ be such a function. Then, by Taylor's theorem (we omit the dependence on $i,j$ and $\b$ to make the notation easier to follow),

\begin{eqnarray*}
&&f\left(\frac{\log Z_n-a_n}{\sigma}\right)=f\left(\frac{\log(V_n+W_n)-a_n}{\sigma}\right)\\
&&+f'\left(\frac{\log(V_n+W_n)-a_n}{\sigma}\right)\frac{W_n}{\sigma(V_n+W_n)} (\omij-1)\\
&&-\frac{1}{2}\left[f'\left(\frac{\log(V_n+W_n)-a_n}{\sigma}\right)\right.\\
&&\phantom{-}\left.-\sigma^{-1}f''\left(\frac{\log(V_n+W_n)-a_n}{\sigma}\right)\right] \frac{W_n^2}{\sigma(V_n+W_n)^2} (\omij-1)^2\\
&&+\frac{1}{6}\left[2f'\left(\frac{\log(V_n+\eta W_n)-a_n}{\sigma}\right)\right.\\
&&\phantom{-}-3\sigma^{-1} f''\left(\frac{\log(V_n+\eta W_n)-a_n}{\sigma}\right)\\
&&\phantom{+}\left.+\sigma^{-2} f'''\left(\frac{\log(V_n+\eta W_n)-a_n}{\sigma}\right)\right] \frac{W_n^3}{\sigma(V_n+\eta W_n)^3}(\omij-1)^3
\end{eqnarray*}

for some $\eta$ between 1 and $\omij(\b)$. Here $\sigma=\b^{4/3} n^{1/3}$ is the scaling factor that appears in Theorem \ref{perturbation thm}. We can do exactly the same but with only the weight $\omij$ replaced by $\omij'$ in the polymer. Then, using the fact that the first two moments match, that $W_n$ and $V_n$ are independent of $\omij$ and $\omij'$, that the derivatives of $f$ are bounded, and Condition 3 for valid sets of weights applied to the third moment, we obtain

\begin{equation}\label{taylor bound}
\begin{split}
&\E\left|f\left(\frac{\log(V_n+\omij W_n)-a_n}{\sigma}\right)-f\left(\frac{\log(V_n+\omij' W_n)-a_n}{\sigma}\right)\right|\\
&\le \frac{C\b^3}{\sigma} \E\left( \frac{W_n^3}{(V_n+\eta W_n)^3}+\frac{W_n^3}{(V_n+\eta' W_n)^3} \right),
\end{split}
\end{equation}

for some number $\eta'$ between 1 and $\omij'(\b)$. Here and in what follows, we will use $C$ to denote an unspecified constant that may change from line to line. We can get rid of the dependence on $\eta$ by noting that if $\omij \le 1$, then $\omij \le \eta$, so $V_n+\eta W_n \ge V_n+\omij W_n=Z_n$, and if $\omij>1$, then $\eta \ge 1$, so $V_n+\eta W_n \ge V_n+W_n$, and likewise for $\eta'$. So the term in \eqref{taylor bound} can be bounded above by

\begin{equation}\label{taylor bound 2}
\frac{C \b^3}{\sigma} \E\left( \frac{W_n^3}{Z_n^3}+\frac{W_n^3}{(V_n+W_n)^3} \right).
\end{equation}

The first term in the expectation is the probability that three independent paths picked according to the polymer measure all intersect at the point $(i,j)$, and the second is the same thing except the weight $\omij(\b)$ has been replaced by 1. After adding all the errors over all $i,j$, we get the expected number of times that three independent polymer distributed paths intersect simultaneously. If the polymer measure was the same as Bernoulli random walk, then this would be of order $\log n$, but unfortunately that is not the case and the analysis requires some more work.

To illustrate some of the ideas that go into the later calculations, suppose that we are in the supercritical case $\b=n^{-\alpha}$ for some $\alpha>1/4$, say $\alpha=1/4+\delta$, and let us compute the variance of the normalized partition function $Z_{n,\b}/\binom{2n}{n}$. We clearly have $\E(Z_{n,\b}/\binom{2n}{n})=1$, and so

\begin{eqnarray*}
\text{Var}\left(\frac{Z_{n,\b}}{\binom{2n}{n}}\right)&=&\E\left(\frac{1}{\binom{2n}{n}} \sum_{\pi} \prod_{i=0}^{2n} \om_{i,\pi(i)}(\b) \right)^2-1\\
&=&\E\left(\frac{1}{\binom{2n}{n}^2}\sum_{\pi_1,\pi_2}\prod_{i=0}^{2n} \om_{i,\pi_1(i)}\om_{i,\pi_2(i)}\right)-1\\
&=&\frac{1}{\binom{2n}{n}^2}\sum_{\pi_1,\pi_2}\prod_{i=0}^{2n} \E(\om_{i,\pi_1(i)}\om_{i,\pi_2(i)})-1.
\end{eqnarray*}

The second and third sums are taken over pairs of paths $\pi_1, \pi_2$. If $i \neq j$, then the weights $\om_{i,k}$ and $\om_{j,\ell}$ are necessarily distinct no matter what $k$ and $\ell$ are, hence they are independent which is why the expectation can be switched with the product in this way. We can further simplify the expectation of each term
$$\E(\om_{i,\pi_1(i)}\om_{i,\pi_2(i)})=\E(\om_{i,\pi_1(i)})\E(\om_{i,\pi_2(i)})=1$$
but only if $\pi_1$ and $\pi_2$ do not intersect at time $i$. If they do intersect at time $i$, then we instead get the second moment of $\om_{i,\pi_1(i)}$, which in view of Condition 3 for valid sets of weights, should be bounded above by $1+C\b^2$. We therefore get
$$\text{Var}\left(\frac{Z_{n,\b}}{\binom{2n}{n}}\right) \le \frac{1}{\binom{2n}{n}^2} \sum_{\pi_1, \pi_2} (1+C\b^2)^{L(\pi_1,\pi_2)}-1,$$
where $L(\pi_1,\pi_2)$ is the \textit{intersection local time} of $\pi_1,\pi_2$, i.e the number of times that they intersect. The sum can be interpreted as an expectation over pairs of paths selected using Bernoulli random walk bridge measure, and in that case, it turns out that $L(\pi_1,\pi_2)=O(\sqrt{n})$ with high probability, so that
$$\text{Var}\left(\frac{Z_{n,\b}}{\binom{2n}{n}}\right) \le (1+C\b^2)^{C\sqrt{n}}-1=\left(1+\frac{C}{n^{1/2+2\delta}}\right)^{C\sqrt{n}}-1\le \frac{C}{n^{2\delta}}.$$
More generally, the same calculations can be done for the centred $k$-th moment of $Z_{n,\b}/\binom{2n}{n}$. When expanding the $k$-th power, we instead get a sum over $k$-tuples of paths, weighted by exponentials of the number of times that at least two paths intersect. These exponentials are more complicated since it is possible for 3 or more paths to intersect simultaneously, but these occurrences are much rarer. With some work, one can then obtain
\begin{equation}\label{kth centred moment}
\E \left| \frac{Z_{n,\b}}{\binom{2n}{n}}-1\right|^k \le \frac{C_k}{n^{k\delta}}.
\end{equation}
The constants $C_k$ depend on $k$, but not on $n$. Hence, using the obvious bound $W_n\le Z_n$ and Markov's inequality,
\begin{eqnarray*}
\E\left( \frac{W_n^3}{Z_n^3}\right)&=&\E\left(\frac{(W_n/\binom{2n}{n})^3}{(Z_n/\binom{2n}{n})^3}\right)\\
&=&\E\left(\frac{(W_n/\binom{2n}{n})^3}{(Z_n/\binom{2n}{n})^3}\mathbbm{1}_{\{|Z_n/\binom{2n}{n}-1|< 1/2\}}\right)\\
&\phantom{=}&+\E\left(\frac{(W_n/\binom{2n}{n})^3}{(Z_n/\binom{2n}{n})^3}\mathbbm{1}_{\{|Z_n/\binom{2n}{n}-1|\ge 1/2\}}\right)\\
&\le& 8\E\left(\frac{W_n}{\binom{2n}{n}}\right)^3+\P\left(\left|\frac{Z_n}{\binom{2n}{n}}-1\right| \ge \frac{1}{2}\right)\\
&\le& 8\E\left(\frac{W_n}{\binom{2n}{n}}\right)^3+\frac{C_k}{n^{k\delta}}.
\end{eqnarray*}
The third moment of $W_n/\binom{2n}{n}$ can be estimated in a similar way to how we obtained \eqref{kth centred moment} (by considering paths conditioned to go through the specific point $(i,j)$). For the second term, we simply pick a large enough $k$ so that $n^{-k\delta}$ is of lower order than the third moment of $W_n/\binom{2n}{n}$. A similar calculation can be done for $\E(W_n/(V_n+W_n))^3$, and this gives a bound for \eqref{taylor bound 2}.

Of course, this all only works when $\alpha>1/4$, which is not our case. However this argument can still be used by working \textit{locally} instead. Let $[a,b]$ be some interval contained in $[0,2n]$, and let $n_0=b-a$. Then $Z_{n,\b}$ can be rewritten as
\begin{equation}\label{partition function other form}
Z_{\mu}(\b):=\frac{Z_{n,\b}}{\Zc}=\sum_{\pi:a \to b} \mu(\pi) \prod_{\ell \in [a,b]} \om_{\ell,\pi(\ell)}(\b),
\end{equation}
where
\begin{equation}\label{mu definition}
\mu(\pi)=\frac{1}{\Zc} \sum_{\widetilde{\pi}: \left. \widetilde{\pi}\right|_{[a,b]}=\pi} \,\,\prod_{\ell \notin [a,b]}  \om_{\ell, \widetilde{\pi}(\ell)}(\b), \quad \Zc=\sum_{\pi} \prod_{i \notin [a,b]} \om_{i,\pi(i)}(\b).
\end{equation}
The summation for $Z_{n,\b}/\Zc$ is taken over paths on $[a,b]$, the one for $\Zc$ is over all paths on $[0,2n]$, and the one for $\mu$ is over paths $\widetilde{\pi}$ on $[0,2n]$ whose restriction to $[a,b]$ is $\pi$. If we condition on the weights lying outside the strip $[a,b]\times \Z$, then $\mu$ is a probability measure on paths in $[a,b]$, so that $Z_{\mu}(\b)$ can be interpreted as the partition function for the polymer where the paths are on $[a,b]$ and are weighted according to $\mu$. The idea is that if $n_0$ is small enough, then we'll have $\b \sim n_0^{-\gamma}$ for an exponent $\gamma$ that is larger than $1/4$, and so the calculations above work and we get a good estimate in \eqref{taylor bound 2}.

The only slight wrinkle now is that the paths are distributed according to $\mu$, not Bernoulli walk measure, so obtaining \eqref{kth centred moment} requires some extra work. Concretely, $\mu$ is simply a Bernoulli bridge measure where the endpoints are random, chosen depending on what the environment is outside of the strip $[a,b]$. Indeed, two paths on $[a,b]$ with the same endpoints have the same measure, since the paths on $[0,2n]$ with those restrictions on $[a,b]$ must agree outside of $[a,b]$, and $\mu$ only counts weights from the environment outside the strip. See Figure \ref{mu paths figure}.

 \begin{figure}
\centering
\begin{tikzpicture}
	\draw[->] (0,0) -- (8.3,0) node[right] {};
	\draw[->] (0,0) -- (0,7.3) node[above] {};
	
	\foreach \i\j\k\l in {2.0/1.0/2.04/1.1, 2.04/1.1/2.08/1.1, 2.08/1.1/2.12/1.2, 2.12/1.2/2.16/1.2, 2.16/1.2/2.2/1.2, 2.2/1.2/2.24/1.2, 2.24/1.2/2.28/1.3, 2.28/1.3/2.32/1.4, 2.32/1.4/2.36/1.5, 2.36/1.5/2.4/1.6, 2.4/1.6/2.44/1.6, 2.44/1.6/2.48/1.6, 2.48/1.6/2.52/1.6, 2.52/1.6/2.56/1.7, 2.56/1.7/2.6/1.7, 2.6/1.7/2.64/1.7, 2.64/1.7/2.68/1.8, 2.68/1.8/2.72/1.8, 2.72/1.8/2.76/1.9, 2.76/1.9/2.8/2.0, 2.8/2.0/2.84/2.1, 2.84/2.1/2.88/2.1, 2.88/2.1/2.92/2.1, 2.92/2.1/2.96/2.2, 2.96/2.2/3.0/2.3, 3.0/2.3/3.04/2.4, 3.04/2.4/3.08/2.5, 3.08/2.5/3.12/2.5, 3.12/2.5/3.16/2.5, 3.16/2.5/3.2/2.5, 3.2/2.5/3.24/2.5, 3.24/2.5/3.28/2.5, 3.28/2.5/3.32/2.5, 3.32/2.5/3.36/2.5, 3.36/2.5/3.4/2.5, 3.4/2.5/3.44/2.5, 3.44/2.5/3.48/2.5, 3.48/2.5/3.52/2.5, 3.52/2.5/3.56/2.6, 3.56/2.6/3.6/2.6, 3.6/2.6/3.64/2.6, 3.64/2.6/3.68/2.7, 3.68/2.7/3.72/2.8, 3.72/2.8/3.76/2.9, 3.76/2.9/3.8/3.0, 3.8/3.0/3.84/3.1, 3.84/3.1/3.88/3.2, 3.88/3.2/3.92/3.2, 3.92/3.2/3.96/3.3, 3.96/3.3/4.0/3.3, 4.0/3.3/4.04/3.3, 4.04/3.3/4.08/3.3, 4.08/3.3/4.12/3.4, 4.12/3.4/4.16/3.5, 4.16/3.5/4.2/3.6, 4.2/3.6/4.24/3.7, 4.24/3.7/4.28/3.7, 4.28/3.7/4.32/3.7, 4.32/3.7/4.36/3.7, 4.36/3.7/4.4/3.8, 4.4/3.8/4.44/3.9, 4.44/3.9/4.48/4.0, 4.48/4.0/4.52/4.1, 4.52/4.1/4.56/4.1, 4.56/4.1/4.6/4.2, 4.6/4.2/4.64/4.2, 4.64/4.2/4.68/4.2, 4.68/4.2/4.72/4.3, 4.72/4.3/4.76/4.3, 4.76/4.3/4.8/4.3, 4.8/4.3/4.84/4.4, 4.84/4.4/4.88/4.4, 4.88/4.4/4.92/4.4, 4.92/4.4/4.96/4.4, 4.96/4.4/5.0/4.5, 5.0/4.5/5.04/4.6, 5.04/4.6/5.08/4.7, 5.08/4.7/5.12/4.7, 5.12/4.7/5.16/4.7, 5.16/4.7/5.2/4.8, 5.2/4.8/5.24/4.8, 5.24/4.8/5.28/4.9, 5.28/4.9/5.32/5.0, 5.32/5.0/5.36/5.1, 5.36/5.1/5.4/5.2, 5.4/5.2/5.44/5.3, 5.44/5.3/5.48/5.4, 5.48/5.4/5.52/5.4, 5.52/5.4/5.56/5.4, 5.56/5.4/5.6/5.5, 5.6/5.5/5.64/5.6, 5.64/5.6/5.68/5.7, 5.68/5.7/5.72/5.8, 5.72/5.8/5.76/5.8, 5.76/5.8/5.8/5.8, 5.8/5.8/5.84/5.9, 5.84/5.9/5.88/5.9, 5.88/5.9/5.92/5.9, 5.92/5.9/5.96/6.0}{
	\draw[-, color=blue, line width=0.5mm] (\i,\j) -- (\k,\l);
	}
	
	\foreach \i\j\k\l in {2.0/2.0/2.04/2.0, 2.04/2.0/2.08/2.0, 2.08/2.0/2.12/2.1, 2.12/2.1/2.16/2.1, 2.16/2.1/2.2/2.2, 2.2/2.2/2.24/2.2, 2.24/2.2/2.28/2.3, 2.28/2.3/2.32/2.3, 2.32/2.3/2.36/2.3, 2.36/2.3/2.4/2.3, 2.4/2.3/2.44/2.4, 2.44/2.4/2.48/2.4, 2.48/2.4/2.52/2.4, 2.52/2.4/2.56/2.4, 2.56/2.4/2.6/2.4, 2.6/2.4/2.64/2.5, 2.64/2.5/2.68/2.5, 2.68/2.5/2.72/2.5, 2.72/2.5/2.76/2.6, 2.76/2.6/2.8/2.7, 2.8/2.7/2.84/2.8, 2.84/2.8/2.88/2.8, 2.88/2.8/2.92/2.9, 2.92/2.9/2.96/2.9, 2.96/2.9/3.0/2.9, 3.0/2.9/3.04/2.9, 3.04/2.9/3.08/2.9, 3.08/2.9/3.12/2.9, 3.12/2.9/3.16/2.9, 3.16/2.9/3.2/3.0, 3.2/3.0/3.24/3.1, 3.24/3.1/3.28/3.1, 3.28/3.1/3.32/3.1, 3.32/3.1/3.36/3.1, 3.36/3.1/3.4/3.2, 3.4/3.2/3.44/3.2, 3.44/3.2/3.48/3.2, 3.48/3.2/3.52/3.2, 3.52/3.2/3.56/3.2, 3.56/3.2/3.6/3.2, 3.6/3.2/3.64/3.2, 3.64/3.2/3.68/3.2, 3.68/3.2/3.72/3.3, 3.72/3.3/3.76/3.3, 3.76/3.3/3.8/3.3, 3.8/3.3/3.84/3.3, 3.84/3.3/3.88/3.3, 3.88/3.3/3.92/3.3, 3.92/3.3/3.96/3.3, 3.96/3.3/4.0/3.3, 4.0/3.3/4.04/3.3, 4.04/3.3/4.08/3.4, 4.08/3.4/4.12/3.4, 4.12/3.4/4.16/3.4, 4.16/3.4/4.2/3.4, 4.2/3.4/4.24/3.4, 4.24/3.4/4.28/3.5, 4.28/3.5/4.32/3.5, 4.32/3.5/4.36/3.5, 4.36/3.5/4.4/3.6, 4.4/3.6/4.44/3.7, 4.44/3.7/4.48/3.7, 4.48/3.7/4.52/3.7, 4.52/3.7/4.56/3.7, 4.56/3.7/4.6/3.8, 4.6/3.8/4.64/3.8, 4.64/3.8/4.68/3.8, 4.68/3.8/4.72/3.9, 4.72/3.9/4.76/3.9, 4.76/3.9/4.8/4.0, 4.8/4.0/4.84/4.0, 4.84/4.0/4.88/4.0, 4.88/4.0/4.92/4.0, 4.92/4.0/4.96/4.0, 4.96/4.0/5.0/4.0, 5.0/4.0/5.04/4.1, 5.04/4.1/5.08/4.1, 5.08/4.1/5.12/4.2, 5.12/4.2/5.16/4.3, 5.16/4.3/5.2/4.3, 5.2/4.3/5.24/4.4, 5.24/4.4/5.28/4.5, 5.28/4.5/5.32/4.6, 5.32/4.6/5.36/4.6, 5.36/4.6/5.4/4.6, 5.4/4.6/5.44/4.6, 5.44/4.6/5.48/4.7, 5.48/4.7/5.52/4.7, 5.52/4.7/5.56/4.8, 5.56/4.8/5.6/4.8, 5.6/4.8/5.64/4.8, 5.64/4.8/5.68/4.8, 5.68/4.8/5.72/4.8, 5.72/4.8/5.76/4.9, 5.76/4.9/5.8/5.0, 5.8/5.0/5.84/5.0, 5.84/5.0/5.88/5.0, 5.88/5.0/5.92/5.0, 5.92/5.0/5.96/5.0}{
	\draw[-, color=red, line width=0.5mm] (\i,\j) -- (\k,\l);
	}
	
	\draw[-, color=black, line width=0.5mm, dashed] (2,0) -- (2,7);
	\draw[-, color=black, line width=0.5mm, dashed] (6,0) -- (6,7);
	
	\foreach \i in {0.4,0.8,1.2,1.6}{
	\foreach \j in {0.4,0.8,...,7.2}{
	\filldraw[black]  (\i,\j) circle (1pt);
	}
	}
	
	\foreach \i in {6.4,6.8,7.2,7.6}{
	\foreach \j in {0.4,0.8,...,7.2}{
	\filldraw[black]  (\i,\j) circle (1pt);
	}
	}
	
	\coordinate[label=below:$a$] (x) at (2,0);
	\draw[-] (2,-0.1) -- (2,0.1);
	\coordinate[label=below:$b$] (x) at (6,0);
	\draw[-] (6,-0.1) -- (6,0.1);

\end{tikzpicture}
\caption{Two paths in $[a,b]$ distributed according to $\mu$. The endpoints of the paths are random and depend on the configuration of the $\omij$'s outside of the strip, represented as black dots here.}
\label{mu paths figure}
\end{figure}
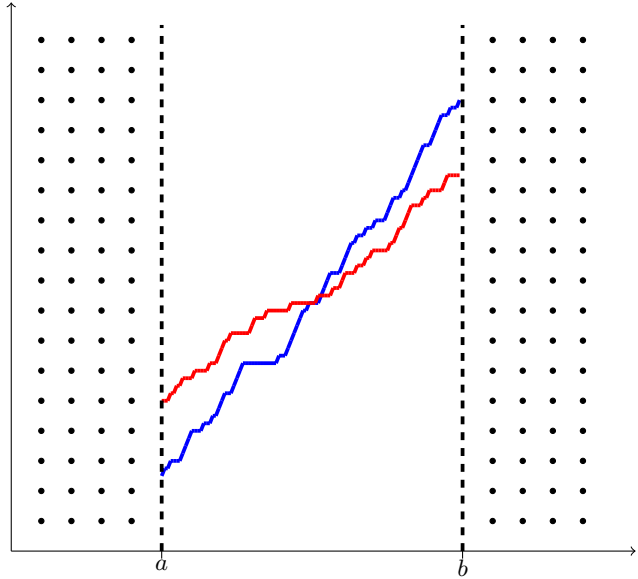

It turns out that the number of intersections of independent Bernoulli bridges behaves very similarly to the number of intersections of Bernoulli random walks, as long as the start and end points are not too far from each other. For instance, in the worst possible case, the starting point is $(a,0)$ and the endpoint is $(b,0)$ or $(b,n_0)$. In both cases there is exactly one path with those endpoints, and so two bridges with those endpoints will intersect everywhere (so $n_0$ times), whereas we want the number of intersections to be of order $\sqrt{n_0}$. So $n_0$ should be small, but not too small, otherwise there will be too many paths on $[a,b]$ with a slope close to 0 or 1. Then all of the calculations done previously will work in this setup.

\section{Local fluctuations of the polymer}
For two integers $0 \le a<b$, a \textit{path on $[a,b]$} is the restriction of a path to the interval $[a,b]$. We denote the set of all paths on $[a,b]$ by $\Pi[a,b]$. Note that if $\pi \in \Pi[a,b]$, then we must have $0 \le \pi(a) \le a$ and $\pi(a) \le \pi(b) \le b$.

For $0 \le x \le a$ and $p \in [0,1]$, we denote by $\P^{[a,b]}_{p,x}$ the usual Bernoulli walk measure on $\Pi[a,b]$ with mean $p$, started at $x$:
$$\P^{[a,b]}_{p,x}(\pi)=\binom{b-a}{\pi(b)-x}p^{\pi(b)-x} (1-p)^{b-a-(\pi(b)-x)}.$$
If we are also given a $y$ such that $x \le y \le x+b-a$, we denote by $\P^{[a,b]}_{x,y}$ the uniform measure on the sets of paths such that $\pi(a)=x$ and $\pi(b)=y$, and we refer to this measure as a \textit{(Bernoulli) bridge}. The corresponding expectations for these measures are denoted by $\E^{[a,b]}_{p,x}$ and $\E^{[a,b]}_{x,y}$. We also use the same notation for the product measure and expectation on $\Pi[a,b]^k$, where the subscripts $x$ and $y$ are then understood to be vectors of size $k$ corresponding to the endpoints of each of the paths, and $p$ is also a vector of size $k$ whose entries are the means of the $k$ Bernoulli walks. We will usually omit the superscript $[a,b]$ for both the Bernoulli walk and bridge measure when $a$ and $b$ have been fixed earlier on.

If $\pi \sim \P^{[a,b]}_{p,x}$, then $p$ will be referred to as the \textit{slope} of $\pi$, and if $\pi \sim \P^{[a,b]}_{x,y}$, the slope of $\pi$ will refer to the slope of the line joining $(a,x)$ to $(b,y)$, namely $(y-x)/(b-a)$.

Given a set of paths $\pi_1, \dots, \pi_k$ in $\Pi[a,b]$, we let
$$v(i,j)=\text{the number of paths that visited the site } (i,j),$$
and 
$$\mathcal{V}(\pi_1, \dots, \pi_k)=\{(i,j): v(i,j) \ge 2\}.$$
The \textit{intersection local time} of the paths $\pi_1, \dots, \pi_k$ is the cardinality of the set $\mathcal{V}(\pi_1, \dots, \pi_k)$, and we denote it by $L(\pi_1,\dots,\pi_k)$. We have the bound
\begin{equation}\label{basic inequality for L}
L(\pi_1, \dots, \pi_k) \le \sum_{1\le j< i \le k} L(\pi_i,\pi_j),
\end{equation}
since the right-hand side counts multiple times all the instances where three or more paths intersect all at once (for example, if $\pi_1,\pi_2,\pi_3$ intersect all at the same time, then the left-hand side counts this intersection once, whereas the right-hand side counts it three times).

\begin{theo}\label{centred moments Z}
Let $\mu$ be a probability measure on paths in $[a,b]$, $n_0=b-a$, and $k$ a positive even integer. Then
$$\E((Z_{\mu}(\b)-1)^k) \le \left(\sum_{j=1}^{n_0} (C_k)^j \b^j \left[\frac{\E_{\mu}(L(\pi_1,\dots,\pi_k)^{kj/2})}{(kj/2)!}\right]^{1/k}\right)^k,$$
where $\pi_1, \dots, \pi_k$ are independent paths distributed according to $\mu$.
\end{theo}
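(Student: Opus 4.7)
The plan is to Taylor-expand $Z_\mu(\b) - 1$ in the small centered variables $X_{\ell,x} := \omega_{\ell,x}(\b) - 1$, apply Minkowski's inequality in $L^k$ to reduce to a bound on each Taylor coefficient, and then estimate that coefficient combinatorially using the intersection local time.

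By conditions (2) and (3) for valid weights, the variables $X_{\ell,x}$ are independent and centered with $\E|X_{\ell,x}|^m \le C_m \b^m$ for every $m \ge 1$. Expanding $\prod_\ell (1 + X_{\ell, \pi(\ell)}) - 1$ over the nonempty subsets $S \subseteq [a,b]$ of activated factors and summing against $\mu$ gives the decomposition
\[
Z_\mu(\b) - 1 \;=\; \sum_{j \ge 1} U_j, \qquad U_j \;:=\; \sum_{|S|=j}\,\sum_\pi \mu(\pi)\prod_{\ell \in S} X_{\ell, \pi(\ell)}.
\]
Since $k$ is even, $(Z_\mu(\b)-1)^k = |Z_\mu(\b)-1|^k \le (\sum_j |U_j|)^k$, and Minkowski's inequality in $L^k$ yields $(\E(Z_\mu(\b)-1)^k)^{1/k} \le \sum_j (\E U_j^k)^{1/k}$. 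It therefore suffices to prove, for each $j$,
\[
\E U_j^k \;\le\; (C_k)^{kj}\,\b^{kj}\,\frac{\E_\mu\, L(\pi_1,\dots,\pi_k)^{kj/2}}{(kj/2)!}.
\]

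To this end, I would expand $U_j^k$ into a sum over $k$-tuples $((\pi_i, S_i))_{i=1}^k$ with $|S_i|=j$, and use the independence of the $X_{\ell,x}$'s across distinct sites to factor the resulting expectation as $\prod_{(\ell,x)}\E X_{\ell,x}^{m_{\ell,x}}$, where $m_{\ell,x} := |\{i : \ell \in S_i,\, \pi_i(\ell) = x\}|$. Since $\E X = 0$, only \emph{valid} configurations (those with $m_{\ell,x} \neq 1$ at every site) contribute, and for these condition (3) gives $|\prod \E X^{m_{\ell,x}}| \le \tilde C_k^{\,K}\b^{kj}$, where $K$ is the number of distinct support sites and satisfies $K \le kj/2$ since each nonzero multiplicity is at least $2$. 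The remaining task reduces to the combinatorial estimate that for fixed paths $\vec\pi$ with local time $L$,
\[
N(\vec\pi, j) \;:=\; \#\{\text{valid } (S_1,\dots,S_k) : |S_i|=j\} \;\le\; (C_k')^{kj}\,\frac{L^{kj/2}}{(kj/2)!}.
\]

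To prove this, I would parameterize a valid configuration by its support $R \subseteq \mathcal{V}$, its multiplicity profile $(m_v)_{v \in R}$ (with $m_v \ge 2$ and $\sum m_v = kj$), and the assignment of indices at each $v = (\ell,x) \in R$ from $G_v := \{i : \pi_i(\ell) = x\}$. Bounding $\binom{|G_v|}{m_v} \le k^{m_v}/m_v!$ turns the count into $k^{kj}\sum_{K}\binom{L}{K}[x^{kj}](e^x - 1 - x)^K$, and since $e^x - 1 - x$ vanishes to order $2$ at $0$, the dominant contribution comes from $K = kj/2$ (all multiplicities equal to $2$), giving exactly the factor $\binom{L}{kj/2} \le L^{kj/2}/(kj/2)!$. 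The main obstacle is controlling this ``peaking'' uniformly in $L$ and $j$: when $L$ is small the apparent maximizer in $K$ shifts, but the constraint $|S_i|=j$ forces $L \ge j$ whenever $N > 0$, so a separate analysis recovers the bound with the remaining combinatorial overhead absorbed into $(C_k')^{kj}$. Once this pointwise bound on $N$ is in hand, taking the expectation over $\mu^{\otimes k}$ and substituting back into the Minkowski bound yields exactly the stated inequality.
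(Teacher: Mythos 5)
Your proof takes essentially the same route as the paper's: you decompose $Z_\mu(\b)-1=\sum_j U_j$ by the number of activated factors, apply the $L^k$ triangle inequality, expand $\E U_j^k$ over $k$-tuples of paths and index sets, and use independence and centering of $\zeta_{\ell,x}=\omega_{\ell,x}(\b)-1$ to reduce to counting configurations in which every site has multiplicity at least $2$, bounded by $L(\pi_1,\dots,\pi_k)^{kj/2}/(kj/2)!$ up to a $(C_k)^j$ factor. The only difference is bookkeeping: the paper first applies H\"older to extract the uniform moment bound $(C_k)^j\b^{kj}$ on each non-vanishing product and then counts, whereas you factor the expectation over sites directly and organize the count via multiplicity profiles and a generating-function argument (your closing remark about absorbing the remaining overhead into $(C_k')^{kj}$ corresponds to the paper's equally informal step bounding the number of admissible configurations by $\binom{L}{kj/2}$).
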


\begin{remark}
As the subscript suggest, the constants $C_k$ depend on $k$, but not $n_0$. They also depend on the collection of weights, but only in terms of the constants involved in Conditions 3 and 4 of valid sets of weights. As discussed in Remark \ref{combination of valid weights}, any combination of two valid sets of weights is again valid, and it's clear that the constants in Conditions 3 and 4 can be chosen so that they work for any possible combination. This means that the constants in the above theorem really only depend on $k$.
\end{remark}

\begin{proof}
For each $i$ and $j$, let $\zeta_{i,j}(\b)=\omij(\b)-1$. Then the $\zeta_{i,j}$ have mean 0, and by Condition 3 of valid sets of weights, $\E|\zeta_{i,j}(\b)|^k \le C_k \b^k$. We can thus rewrite $Z_{\mu}(\b)-1$ as follows
\begin{eqnarray*}
Z_{\mu}(\b)-1 &=& \sum_{\pi \in \Pi[a,b]} \left[ \prod_{i=a}^b (1+\zeta_{i,\pi(i)}(\b))-1\right]\mu(\pi)\\
&=&\sum_{\pi \in \Pi[a,b]} \mu(\pi)\sum_{S \subseteq [a,b], S\neq \emptyset}\prod_{i \in S} \zeta_{i, \pi(i)}(\b).
\end{eqnarray*}
Here the inner sum is taken over non-empty subsets of $[a,b]$ (we interpret intervals in this proof as intervals of integers, so $[a,b]=\{a,a+1,\dots,b\}$). For $1 \le j \le n_0$, let
$$Z_{\mu,j}(\b)=\sum_{\pi \in \Pi[a,b]} \mu(\pi)\sum_{S \subseteq [a,b], |S|=j} \prod_{i \in S} \zeta_{i, \pi(i)}(\b).$$
Then $Z_{\mu}-1=Z_{\mu,1}(\b)+\dots+Z_{\mu,n_0}(\b)$. We will estimate the $k$-th moment of each $Z_{\mu,j}(\b)$.

We have
\begin{equation}\label{k-th moment of Z mu j}
\E(Z_{\mu,j})^k
=\E\left(  \sum_{\pi_1, \dots, \pi_k} \mu(\pi_1) \cdots \mu(\pi_k) \sum_{\substack{S_1, \dots, S_k \\|S_i|=j}}\prod_{i_1 \in S_1} \zeta_{i_1,\pi_1(i_1)} \cdots \prod_{i_k \in S_k}\zeta_{i_k,\pi_k(i_k)}\right).
\end{equation}
Next, by H\"older's inequality and the inequalities $\E|\zeta_{i,j}(\b)|^k \le C_k \b^k$,
\begin{eqnarray*}
&&\E\left|\prod_{i_1 \in S_1} \zeta_{i_1,\pi_1(i_1)} \cdots \prod_{i_k \in S_k}\zeta_{i_k,\pi_k(i_k)}\right|\\
&\le& \left(\E\left(\prod_{i_1 \in S_1} \zeta_{i_1,\pi_1(i_1)}^k\right)\right)^{1/k} \cdots \left(\E\left(\prod_{i_k \in S_k} \zeta_{i_k,\pi_k(i_k)}^k\right)\right)^{1/k}\\
&=&\left(\prod_{i_1 \in S_1} \E(\zeta_{i_1, \pi_1(i_1)}^k)\right)^{1/k} \cdots \left(\prod_{i_k \in S_k} \E(\zeta_{i_k, \pi_k(i_k)}^k)\right)^{1/k}\le (C_k)^j \b^{kj}.
\end{eqnarray*}
Now each product in the inner sum of \eqref{k-th moment of Z mu j} that has non-zero expectation is obtained by taking points $(i_1,j_1), \dots (i_{\ell},j_{\ell})$ in $\mathcal{V}(\pi_1, \dots, \pi_k)$ such that
$$v(i_1,j_1)+\dots+v(i_{\ell},j_{\ell})=kj$$
and multiplying their corresponding weights $\zeta_{i_1,j_1}^{v(i_1,j_i)}, \dots, \zeta_{i_{\ell},j_{\ell}}^{v(i_{\ell},j_{\ell})}$. Since each of those $v(i,j)$ are at least 2, the number of such configurations is at most 
$$\binom{L(\pi_1,\dots,\pi_k)}{\frac{kj}{2}} \le \frac{L(\pi_1,\dots,\pi_k)^{kj/2}}{(kj/2)!}.$$
Hence, the expectation of the inner sum is no larger than
$$\frac{L(\pi_1,\dots,\pi_k)^{kj/2}(C_k)^j \b^{kj}}{(kj/2)!}$$
and therefore
\begin{eqnarray*}
\E(Z_{\mu,j})^k &\le& (C_k)^j \b^{kj}\sum_{\pi_1, \dots, \pi_k} \mu(\pi_1) \cdots \mu(\pi_k)\frac{L(\pi_1,\dots,\pi_k)^{kj/2}}{(kj/2)!}\\
&=&\frac{(C_k)^j \b^{kj}}{(kj/2)!}\E_{\mu}(L(\pi_1,\dots,\pi_k)^{kj/2}).
\end{eqnarray*}
Finally, by the triangle inequality for $L^k$ norms,
\begin{eqnarray*}
\E(Z_{\mu}(\b)-1)^k&=&\E(Z_{\mu,1}(\b)+\dots+Z_{\mu,n_0}(\b))^k \\
&\le& \left((\E(Z_{\mu,1}(\b))^k)^{1/k}+\dots+(\E(Z_{\mu,n_0}(\b))^k)^{1/k}\right)^k\\
&\le& \left(\sum_{j=1}^{n_0} (C_k)^j \b^j \left[\frac{\E_{\mu}(L(\pi_1,\dots,\pi_k)^{kj/2})}{(kj/2)!}\right]^{1/k}\right)^k. 
\end{eqnarray*}
\end{proof}

\section{Intersections of Bernoulli walks and bridges}
We now need to estimate the moments of $L(\pi_1, \dots, \pi_k)$ where the $\pi_j$'s are independent and distributed according to Bernoulli bridge measure. It turns out to be easier to estimate those moments when the $\pi_j$'s are distributed according to Bernoulli walk measure, so the first step is to show that we can replace bridges by walks. The next lemma describes how to make this change.

\begin{lem}\label{bridge replace}
There exists an $n^*$ such that the following holds. For any interval $[a,b]$ of length $n \ge n^*$, for any $\pi$ distributed according to bridge measure on $[a,b]$ with slope $p$, for any path $\widetilde{\pi} \in \Pi[a,(b-a)/2]$ and for any $p \in [1/4,3/4]$,
$$\P^{[a,b]}_{x,y}(\pi|_{[a,(b-a)/2]}=\widetilde{\pi}) \le 2 \P^{[a,b]}_{p,x}(\pi|_{[a,(b-a)/2]}=\widetilde{\pi}).$$
\end{lem}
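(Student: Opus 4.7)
The plan is to reduce the inequality to a local limit theorem for the binomial distribution, exploiting the fact that the bridge measure is exactly the walk measure conditioned on its endpoint. First, by Bayes' rule,
\[
\P^{[a,b]}_{x,y}(\pi|_{[a,(b-a)/2]}=\widetilde{\pi}) = \P^{[a,b]}_{p,x}(\pi|_{[a,(b-a)/2]}=\widetilde{\pi}) \cdot \frac{\P^{[a,b]}_{p,x}(\pi(b)=y \mid \pi|_{[a,(b-a)/2]}=\widetilde{\pi})}{\P^{[a,b]}_{p,x}(\pi(b)=y)}.
\]
Writing $n = b-a$, $z = \widetilde{\pi}(a+n/2)$, and $m = y-x$, the Markov property identifies the conditional probability in the numerator with $\P(\mathrm{Bin}(n/2,p) = y-z)$, and the denominator is $\P(\mathrm{Bin}(n,p)=m)$. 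So the entire lemma collapses to the single estimate
\[
R := \frac{\P(\mathrm{Bin}(n/2,p) = y-z)}{\P(\mathrm{Bin}(n,p) = m)} \le 2.
\]

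To bound $R$, I would apply Stirling's formula. Since $p = m/n$ is the slope of the bridge, the denominator evaluates $\mathrm{Bin}(n,p)$ at its mean, and Stirling gives $\P(\mathrm{Bin}(n,p)=m) = (1+o(1))/\sqrt{2\pi n p(1-p)}$. The numerator is bounded above, by unimodality, by the value at the mode $\lfloor (n/2)p\rfloor$, which equals $(1+o(1))/\sqrt{\pi n p(1-p)}$. Combining, $R \le \sqrt{2} + o(1)$, which is strictly less than $2$, so choosing $n^*$ large enough closes the argument.

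The only place that needs care is that the $o(1)$ terms from Stirling must be uniform in $p \in [1/4, 3/4]$ and in the free parameters $y, z$; this is immediate from the explicit remainder form of Stirling's formula together with the bound $p(1-p) \ge 3/16$ on the admissible range of slopes. The hypothesis $p \in [1/4, 3/4]$ is doing exactly the work of keeping the binomial away from its degenerate boundary regime, where the Stirling asymptotics of $\binom{n/2}{(n/2)p}$ would break down. There is no serious obstacle here — the whole proof is a short computation combining Bayes' rule with a standard local CLT estimate, and the factor of $2$ in the statement has the right constant slack because the true limiting ratio is only $\sqrt{2}$.
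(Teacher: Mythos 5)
Your proof is correct and follows essentially the same route as the paper: both rewrite the bridge probability as the walk probability times a ratio of a half-interval to a full-interval binomial point mass (via the Markov property / Bayes' rule), and then estimate that ratio by a local-CLT asymptotic uniform over $p\in[1/4,3/4]$, landing on $\sqrt{2}+o(1)<2$. The only difference is cosmetic: the paper invokes Platonov's uniform local-CLT bound to control both numerator and denominator, whereas you rederive the identical estimate directly from Stirling's formula with remainder together with unimodality of the binomial pmf.
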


Here $\pi|_{[a,(b-a)/2]}$ denotes the restriction of $\pi$ to the interval $[a,(b-a)/2]$. Thus Lemma \ref{bridge replace} asserts that the ratio of the probabilities under either measure that $\pi$ takes on a specific trajectory in the first half of $[a,b]$ is bounded by 2 (the actual constant is irrelevant, what matters is that this constant is independent of $p$, $[a,b]$, etc.). This implies that if $F$ is a measurable function on $\Pi[a,b]^k$ which only depends on the behaviour of the paths in the first half of the interval, then
\begin{equation}\label{bridge replace inequality}
\E^{[a,b]}_{x, y}(F(\pi_1, \dots, \pi_k)) \le 2^k \E^{[a,b]}_{p,x}(F(\pi_1, \dots, \pi_k)),
\end{equation}
provided that all the slopes $p_i=(y_i-x_i)/(b-a)$ are in $[1/4,3/4]$. In particular we can take $F$ to be some moment of the number of intersections of the paths in the first half of the interval.

The main ingredient for the proof of Lemma \ref{bridge replace} is the following theorem (\cite[Equation 25]{pit97}).

\begin{theo}[Platonov]
Let $f$ be the probability mass function of the binomial $(n,p)$ distribution, and let $\phi$ be the density of the standard normal distribution. There exists a universal constant $C$, which does not depend on $p$ or $n$, such that
$$\max_{0 \le k \le n} \left|\sqrt{np(1-p)}f(k)-\phi\left(\frac{k-np}{\sqrt{np(1-p)}}\right)\right|\le \frac{C}{\sqrt{np(1-p)}}.$$
\end{theo}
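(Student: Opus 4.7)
The plan is to prove Platonov's local central limit theorem via the classical Stirling-expansion route, splitting $k \in \{0,1,\dots,n\}$ into a bulk and a tail region. Set $\sigma = \sqrt{np(1-p)}$ and $x = (k-np)/\sigma$, and note the universal bounds $np = \sigma^2/(1-p) \ge \sigma^2$ and $n(1-p) = \sigma^2/p \ge \sigma^2$. If $\sigma \le C_0$ for a fixed constant, the bound is trivial since $\sigma f(k) \le C_0$ and $\phi(x) \le 1/\sqrt{2\pi}$, so any $C \ge C_0(C_0 + 1/\sqrt{2\pi})$ accommodates it; henceforth I assume $\sigma$ is as large as needed.

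For the tail $|x| \ge x_0 := 2\sqrt{\log \sigma}$, Bernstein's inequality on the binomial yields $f(k) \le 2\exp(-x^2/2 \cdot (1+o(1)))$, so $\sigma f(k) \le C/\sigma$; the Gaussian tail bound $\phi(x) \le e^{-x^2/2}/\sqrt{2\pi}$ similarly gives $\phi(x) \le C/\sigma^2$. Hence both terms are $\le C/\sigma$ and the inequality holds trivially. For the bulk $|x| \le x_0$, apply Stirling's formula $\log m! = m\log m - m + \tfrac12 \log(2\pi m) + O(1/m)$ to each of $n!$, $k!$, and $(n-k)!$; since $k, n-k \ge \sigma^2/2$ for large $\sigma$ (using $x_0 \ll \sigma$), all Stirling errors are $O(1/\sigma^2)$. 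Taylor-expand $\log(k/(np)) = \log(1 + x\sigma/(np))$ and $\log((n-k)/(n(1-p))) = \log(1 - x\sigma/(n(1-p)))$ through cubic order: the linear terms $\pm x\sigma$ cancel; the quadratic contributions $-(x\sigma)^2/(2np)$ and $-(x\sigma)^2/(2n(1-p))$ combine via the identity $1/(np) + 1/(n(1-p)) = 1/\sigma^2$ to produce $-x^2/2$; and the $\log(2\pi m)$ corrections supply the prefactor $-\tfrac12\log(2\pi\sigma^2)$. Collecting everything,
$$\log(\sigma f(k)) = -\tfrac12 \log(2\pi\sigma^2) - x^2/2 + R, \quad |R| \le C(1+|x|)^3/\sigma.$$
Since $|R| = O((\log \sigma)^{3/2}/\sigma) \to 0$ in the bulk, one has $e^R - 1 = O(R)$, and therefore
$$|\sigma f(k) - \phi(x)| = \phi(x)|e^R - 1| \le C\phi(x)(1+|x|)^3/\sigma \le C'/\sigma,$$
using the uniform boundedness of $\phi(x)(1+|x|)^3$ in $x$.

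The main obstacle is the careful Taylor bookkeeping in the bulk: the quadratic coefficients must combine exactly to $-x^2/2$ via the identity $1/(np) + 1/(n(1-p)) = 1/\sigma^2$, and the cubic-order remainder must be bounded uniformly in $p \in (0,1)$ without degenerating as $p \to 0$ or $p \to 1$. Both issues are resolved by the universal bounds $np, n(1-p) \ge \sigma^2$: they guarantee the cubic terms $(x\sigma)^3/(np)^2$ and $(x\sigma)^3/(n(1-p))^2$ are each of size $|x|^3/\sigma$, independent of $p$, yielding the uniform remainder bound claimed above.
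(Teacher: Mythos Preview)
The paper does not prove this statement; it simply quotes it from the literature (citing \cite[Equation~25]{pit97}) and then uses it as a black box in the proof of Lemma~\ref{bridge replace}. So there is no proof in the paper to compare against.

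Your self-contained argument via Stirling's formula is the standard route and is essentially correct. Two minor points are worth tightening. First, the displayed identity should read $\log f(k) = -\tfrac12\log(2\pi\sigma^2) - x^2/2 + R$ (not $\log(\sigma f(k))$); the next line, $|\sigma f(k)-\phi(x)| = \phi(x)|e^R-1|$, is consistent with this corrected version, so this is just a typo. Second, with the threshold $x_0 = 2\sqrt{\log\sigma}$ the Bernstein bound only yields $f(k)\le C\exp(-x_0^2/4)=C/\sigma$ in the worst case (since the denominator $2(1+|x|/(3\sigma))$ is genuinely larger than $2$), giving $\sigma f(k)\le C$ rather than $C/\sigma$. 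Taking $x_0 = A\sqrt{\log\sigma}$ for any fixed $A>2\sqrt{2}$ fixes this immediately and does not affect the bulk argument, since you only need $x_0^3/\sigma\to 0$ there. With these cosmetic adjustments the proof goes through and yields a universal constant, as claimed.
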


Platonov's theorem gives an explicit bound for the error term in the local central limit theorem that is uniform over all possible choices of parameters of the binomial distribution.

\begin{proof}[Proof of Lemma \ref{bridge replace}]
Without loss of generality, we can assume that $a=0$ and the starting point $x$ is 0. We thus omit the subscripts and superscripts for the probability measures in the rest of the proof except for the slope $p$ and the endpoint of the bridge $y$ (so $\P_p$ is Bernoulli walk measure and $\P_{p,y}$ is bridge measure). Note that $p=y/n$.

Now, we can rewrite what $\P_{p,y}(\pi|_{[0,n/2]}=\widetilde{\pi})$ is in terms of $\P_{p}(\pi|_{[0,n/2]}=\widetilde{\pi})$ as follows:
$$\P_{p,y}(\pi|_{[0,n/2]}=\widetilde{\pi})=\frac{\P_p(\pi|_{[0,n/2]}=\widetilde{\pi})\P_{p,\widetilde{\pi}(n/2)}(\pi(n)=y)}{\P_p(\pi(n)=y)}.$$

By Platonov's theorem,
\begin{eqnarray*}
\P_{p,\widetilde{\pi}(n/2)}(\pi(n)=y) &\le& \frac{1}{\sqrt{2\pi (n-\pi(n/2)) p(1-p)}}+\frac{C}{np(1-p)}\\
&\le& \frac{1}{\sqrt{\pi n p(1-p)}}+\frac{C}{np(1-p)}.
\end{eqnarray*}
Here we used the fact that $\phi(x) \le 1/\sqrt{2\pi}$ for all $x$ and $\pi(n/2)$ can only be at most $n/2$. Also, by Platonov's theorem again,
\begin{eqnarray*}
\P_p(\pi(n)=y)&\ge& \frac{1}{\sqrt{np(1-p)}}\,\phi\left(\frac{y-np}{\sqrt{np(1-p)}}\right)-\frac{C}{np(1-p)}\\
&=&\frac{1}{\sqrt{2\pi n p(1-p)}}-\frac{C}{np(1-p)}.
\end{eqnarray*}
Putting these two inequalities together yields
$$\frac{\P_{p,\widetilde{\pi}(n/2)}(\pi(n)=y)}{\P_p(\pi(n)=y)}\le \frac{\frac{1}{\sqrt{\pi n p(1-p)}}+\frac{C}{np(1-p)}}{\frac{1}{\sqrt{2\pi n p(1-p)}}-\frac{C}{np(1-p)}}=\frac{\sqrt{2np(1-p)}+C}{\sqrt{np(1-p)}-C}.$$
If $n$ is sufficiently large, then this last bound is $\le 2$ uniformly for $p \in [1/4,3/4]$.
\end{proof}
A very similar argument also works if one considers instead the behaviour of paths in the second half of the interval $[a,b]$, so Lemma \ref{bridge replace} also holds with a path $\widetilde{\pi}$ on $[(b-a)/2,b]$. If we denote by $L(\pi_1, \dots, \pi_k;c,d)$ the number of intersections of the paths $\pi_1, \dots, \pi_k$ in a subinterval $[c,d] \subseteq [a,b]$, then it follows from \eqref{bridge replace inequality} and the triangle inequality for $L^s$ norms that for all $s \ge 1$,
\begin{equation}\label{bridge replace inequality for L}
\begin{split}
&\E^{[a,b]}_{x, y}(L(\pi_1, \dots, \pi_k)^s)\\
&\le \E^{[a,b]}_{x, y}(L(\pi_1, \dots, \pi_k;a,(b-a)/2)+L(\pi_1, \dots, \pi_k;(b-a)/2,b))^s\\
&\le ([\E^{[a,b]}_{x, y}(L(\pi_1, \dots, \pi_k;a,(b-a)/2))^s]^{1/s}\\
&\phantom{\le}+[\E^{[a,b]}_{x, y}(L(\pi_1, \dots, \pi_k;(b-a)/2,b))^s]^{1/s})^s\\
&\le ([2^k\E^{[a,b]}_{p, x}(L(\pi_1, \dots, \pi_k;a,(b-a)/2))^s]^{1/s}\\
&\phantom{\le}+[2^k\E^{[a,b]}_{p, x}(L(\pi_1, \dots, \pi_k;(b-a)/2,b))^s]^{1/s})^s\\
&\le 2^{k+s} \E^{[a,b]}_{p,x}(L(\pi_1, \dots, \pi_k))^s.
\end{split}
\end{equation}
In view of \eqref{basic inequality for L}, we will only really need to consider the case $k=2$.

So now we need to estimate the moments of the number of intersections of two Bernoulli walks. There is yet another simplification we can make. Extend the walks to infinite Bernoulli walks, and let $G(\pi_1,\pi_2)$ be the number of times the two walks intersect before there is an interval of length at least $b-a$ where they do not intersect. Then clearly we have $L(\pi_1,\pi_2; a, b) \le G(\pi_1,\pi_2)$, since any intersection that happens in $[a,b]$ is an intersection that happens before there is an interval of length at least $b-a$ with no intersection. The advantage of working with $G(\pi_1,\pi_2)$ instead is that its distribution is much simpler; it is the $\Geo_0(q_{b-a})$, where $q_k$ is the probability that the two paths do not intersect before time $k$ (we use $\Geo_0$ to denote the geometric distribution on $\{0,1,2,\dots\}$). If $X \sim \Geo_0(q)$, then its moments satisfy
\begin{equation}\label{geometric moments}
\E(X^m) \le \frac{C m!}{q^m}
\end{equation}
for all $q \le 1/2$. One way to see this is that if $Y \sim \Exp(1)$, then $X=\lfloor -Y/\log (1-q) \rfloor$ follows the $\Geo_0(q)$ distribution, $\E(Y^m)=m!$ by integration by parts, and $-\log(1-q) \sim q$ as $q \to 0$. So all we need is a lower bound on the probability that the first intersection of two Bernoulli walks happens after time $k$.

\begin{lem}\label{lower bound intersection prob}
Let $\pi_1, \pi_2$ be independent Bernoulli walks with slopes $p_1, p_2$ respectively, started at the same point, and let $T=\min\{k \ge 1: \pi_1(k)=\pi_2(k)\}$. Then there is an absolute constant $C$ such that for all $n \ge 1$,
$$\P(T \ge n) \ge \frac{C(p_1(1-p_2)+(1-p_1)p_2)}{\sqrt{n}}.$$
\end{lem}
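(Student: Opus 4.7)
My plan is to analyze the difference walk $D_k := \pi_1(k) - \pi_2(k)$, which is a random walk on $\Z$ with $D_0 = 0$ and i.i.d.\ increments in $\{-1,0,+1\}$: the increment equals $+1$ with probability $\alpha := p_1(1-p_2)$, $-1$ with probability $\beta := (1-p_1)p_2$, and $0$ with probability $1-r$, where $r := \alpha + \beta$ is the quantity appearing in the statement. In this notation $T = \min\{k \ge 1 : D_k = 0\}$, so the goal becomes $\P(T \ge n) \ge Cr/\sqrt{n}$. By the symmetry $r(p_1,p_2) = r(p_2,p_1)$, I may assume $\alpha \ge \beta$, so $\alpha \ge r/2$. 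Since the increments of $D$ lie in $\{-1,0,+1\}$ the walk cannot skip the value $0$, and conditioning on the first step gives
\[
\P(T \ge n) \;\ge\; \P(D_1=+1)\,\P_1\!\left(D_k \ge 1,\ 1 \le k \le n-2\right) \;=\; \alpha\,\P_1\!\left(D_k \ge 1,\ 1 \le k \le n-2\right),
\]
where $\P_1$ is the law of $D$ started at $1$.

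The key step is to pass to a non-lazy walk. Let $M$ be the number of non-zero increments among the first $n-2$ steps of $D$; then $M \sim \mathrm{Bin}(n-2,r)$, and conditionally on $M=m$ the successive post-move positions form a simple random walk $(\tilde S_j)_{j=0}^m$ with $\tilde S_0 = 1$ and upward step probability $p := \alpha/r \ge 1/2$. The classical ballot/reflection estimate reads
\[
\P_1\!\left(\tilde S_j \ge 1,\ 1 \le j \le m\right) \;\ge\; \frac{C}{\sqrt{m+1}}, \qquad m \ge 0,
\]
and is immediate for symmetric simple random walk (after the shift $S \mapsto S-1$, reflection yields $\P_0(\min_{1 \le j \le m} S_j \ge 0) = \P_0(S_m \in \{0,1\}) \sim \sqrt{2/(\pi m)}$); it extends to the regime $p \ge 1/2$ by a monotone coupling in which each step of the biased walk dominates the corresponding step of a symmetric walk.

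To combine the pieces I split on $nr$. When $nr \ge 1$, Markov's inequality yields $\P(M \le 2nr) \ge 1/2$; integrating the ballot estimate over this event and using that the right-hand side is monotone in $m$ gives $\P_1(D_k \ge 1,\ 1 \le k \le n-2) \ge C'/\sqrt{nr}$, whence
\[
\P(T \ge n) \;\ge\; \alpha \cdot \frac{C'}{\sqrt{nr}} \;\ge\; \frac{r}{2} \cdot \frac{C'}{\sqrt{nr}} \;=\; C''\sqrt{\frac{r}{n}} \;\ge\; C''\,\frac{r}{\sqrt{n}},
\]
using $r \le 1$. When $nr < 1$, the crude bound $\P(T \ge n) \ge \alpha(1-r)^{n-2}$ (take the first step to be $+1$ and the remaining $n-2$ steps to be lazy) combined with $(1-r)^{n-2} \ge (1-r)^{1/r} \ge 1/4$ for $r \le 1/2$ (the regime $r > 1/2$ forces $n \le 1$ and is trivial) already yields $\P(T \ge n) \ge cr \ge Cr/\sqrt{n}$. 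The main obstacle is the ballot estimate for the asymmetric embedded walk, but the monotone coupling to symmetric SRW settles it without any explicit analysis of biased random walks.
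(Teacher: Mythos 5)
Your proof is correct, and it takes a genuinely different route from the paper's. Both you and the paper reduce to the difference walk $D = \pi_1 - \pi_2$ and extract the factor $r = p_1(1-p_2)+(1-p_1)p_2$ as the probability of an initial non-lazy step, but the remaining ingredients differ. The paper decomposes $D$ as an embedded simple asymmetric random walk $Y$ with i.i.d.\ geometric holding times and needs a lower bound on $\P(T' \ge n)$ for $T'$ the first return of $Y$; it gets this by invoking the exact Feller formula $\P(T' = 2k) = [q(1-q)]^k \binom{2k}{k}/(2k-1)$ and using $q(1-q) \le 1/4$ to reduce uniformly to the symmetric case. You instead condition on the first step, count the non-lazy increments among the next $n-2$ by a $\mathrm{Bin}(n-2,r)$ variable $M$, apply a reflection-type $C/\sqrt{m+1}$ first-passage bound for the embedded walk of length $m$, and handle the asymmetric bias $p = \alpha/r \ge 1/2$ by a pointwise monotone coupling to a symmetric walk; the two-case split on $nr$ then does the bookkeeping. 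Your argument is slightly more self-contained (it does not need the exact first-return distribution of biased SRW, only the symmetric reflection estimate plus stochastic domination) and actually delivers the marginally sharper bound $\P(T\ge n)\ge C\sqrt{r/n}$ when $nr \ge 1$, which implies the stated $Cr/\sqrt{n}$ via $r \le 1$. The one point worth phrasing more carefully is the exact form of the symmetric reflection identity you invoke, but the order-$1/\sqrt{m}$ asymptotic for $\P_0(\min_{1\le j\le m}S_j \ge 0)$ is standard and is all that is used.
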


\begin{proof}
Let $X(k)=\pi_1(k)-\pi_2(k)$. Then $X$ is a ``lazy'' random walk, with transition probabilities given by
$$X(k+1)-X(k)=
\begin{cases}
0 & \quad \text{with probability } p_1 p_2+(1-p_1)(1-p_2)\\
1 & \quad \text{with probability } p_1 (1-p_2)\\
-1 & \quad \text{with probability } (1-p_1)p_2.
\end{cases}
$$
The law of $X$ is the same as a random walk $Y$ with transition steps given by
$$Y(k+1)-Y(k)=
\begin{cases}
1 & \quad \text{with probability } \frac{p_1 (1-p_2)}{p_1(1-p_2)+(1-p_1)p_2}\\
-1 & \quad \text{with probability } \frac{(1-p_1)p_2}{p_1(1-p_2)+(1-p_1)p_2}
\end{cases}
$$
and where at each step, the walk stays there for a $\text{Geo}_0(p_1(1-p_2)+(1-p_1)p_2)$ amount of time before jumping according to $Y$. Let $T'=\min\{k \ge 1: Y(k)=0\}$ be the first return to 0 of $Y$. If the walk $X$ immediately jumps away from 0 (i.e the geometric time spent at 0 is 0) and $T' \ge n$, then we certainly have $T \ge n$. Indeed it will take longer for $X$ to come back to 0 than $Y$ because after each time it jumps, it needs to stay at the same position for a geometric amount of time. We thus have
$$\P(T \ge n) \ge \P(T' \ge n,X(1) \neq 0)= (p_1(1-p_2)+(1-p_1)p_2)\P(T' \ge n).$$

There is an explicit formula for the law of the first return time to 0 of an asymmetric simple random walk $S$ started at 0 with probability $q$ of going up and probability $1-q$ of going down; it is given by

$$\P(S(1)\neq 0, \dots, S(2k-1) \neq 0, S(2k)=0)=[q(1-q)]^k \frac{\binom{2k}{k}}{2k-1}.$$

See \cite[Chapter XIV, Section 9, Exercise 13]{feller68}. For any $k$, this quantity will clearly be the largest when $q=1/2$, so it follows from Stirling's formula that

\begin{eqnarray*}
\P(T' \ge n)&=&1-\sum_{k=0}^{\lfloor n/2 \rfloor} [q(1-q)]^k \frac{\binom{2k}{k}}{2k-1} \ge 1-\sum_{k=0}^{\lfloor n/2 \rfloor} \frac{\binom{2k}{k}}{4^k(2k-1)}\\
&=&\sum_{k=\lfloor n/2 \rfloor}^{\infty}\frac{\binom{2k}{k}}{4^k(2k-1)} \ge \frac{C}{\sqrt{n}}
\end{eqnarray*}
for some appropriate constant $C$.
\end{proof}
Lemma \ref{lower bound intersection prob} thus gives a lower bound which is \textit{uniform} over all slopes $p_1,p_2 \in [1/4,3/4]$. Combining this with \eqref{geometric moments}, the bound $L(\pi_1,\pi_2) \le G(\pi_1,\pi_2)$ and \eqref{bridge replace inequality for L}, we obtain
\begin{equation}\label{moments of intersections bound}
\E_{x,y}(L(\pi_1,\pi_2)^m) \le C m! A^m n^{m/2}
\end{equation}
for constants $C$ and $A$ that do not depend on $n$ and are uniform over all slopes $(y_1-x_1)/(b-a)$ and $(y_2-x_2)/(b-a)$ in $[1/4,3/4]$. 

We finish this section with two technical lemmas where we prove an analogous bound to \eqref{moments of intersections bound} when $m=2$ for triplets of paths, as well as an estimate for the moment generating function of $L(\pi_1,\pi_2)$. These will be needed for the next section.

\begin{lem}\label{triple intersection lemma}
Let $\pi_1, \pi_2, \pi_3$ be independent Bernoulli bridges on $[0,n]$ of slopes $p_1, p_2, p_3$ respectively, and let 
$$L_3(\pi_1,\pi_2,\pi_3)=\sum_{k=1}^n \mathbbm{1}_{\{\pi_1(k)=\pi_2(k)=\pi_3(k)\}}$$
be the number of times that all three paths intersect simultaneously before time $n$. Then there is a constant $C$ such that for all $p_1,p_2,p_3 \in [1/4,3/4]$, and for all $n$ sufficiently large,
$$\E(L_3(\pi_2,\pi_2,\pi_3)^2) \le C (\log n)^2.$$
\end{lem}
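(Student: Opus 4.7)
The plan is a two-stage reduction: first pass from Bernoulli bridges to Bernoulli walks via Lemma \ref{bridge replace}, then bound the pair sum $\sum_{j<k}\E(I_j I_k)$ (where $I_k = \mathbbm{1}\{\pi_1(k)=\pi_2(k)=\pi_3(k)\}$) by $O((\log n)^2)$ through a conditioning-plus-Platonov argument.

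\textbf{Reduction to walks.} Using $L_3(\pi_1,\pi_2,\pi_3) \le L_3(\pi_1,\pi_2,\pi_3;0,n/2) + L_3(\pi_1,\pi_2,\pi_3;n/2,n)$ and Minkowski's inequality in $L^2$, it suffices to estimate the second moment on each half separately. Since $L_3(\,\cdot\,;0,n/2)$ depends only on the paths restricted to the first half, applying Lemma \ref{bridge replace} to each of the three independent bridges gives
$$\E_{\text{bridges}}(L_3(\,\cdot\,;0,n/2)^2) \le 2^3\, \E_{\text{walks}}(L_3(\,\cdot\,;0,n/2)^2),$$
where the walks have the same slopes as the bridges. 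The time-reversed version of Lemma \ref{bridge replace} (mentioned immediately after its proof) handles the second half. It therefore suffices to establish the $(\log n)^2$ bound under product Bernoulli walk measure for $p_1,p_2,p_3 \in [1/4,3/4]$.

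\textbf{Controlling the pair sum.} Write $\E(L_3^2) = \sum_k \E(I_k) + 2\sum_{j<k}\E(I_j I_k)$. Condition on $\pi_1$: by the independence of $\pi_2$ and $\pi_3$, the conditional probability $\E[I_j I_k \mid \pi_1]$ factors into a product over $\ell \in \{2,3\}$ of terms of the form $\P(\pi_\ell(j)=\pi_1(j),\,\pi_\ell(k)=\pi_1(k)\mid \pi_1)$. For each such $\ell$, the Markov property at time $j$ writes this as $\P(\pi_\ell(j)=\pi_1(j))\,\P(\pi_\ell(k)-\pi_\ell(j)=\pi_1(k)-\pi_1(j))$, and Platonov's theorem bounds the two sub-factors by $C/\sqrt{j}$ and $C/\sqrt{k-j}$ respectively, \emph{uniformly} in the target values and in $p_\ell \in [1/4,3/4]$. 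Integrating over $\pi_1$ yields $\E(I_j I_k) \le C'/(j(k-j))$ for $j<k$, and $\E(I_k) \le C'/k$ by the same argument.

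\textbf{Conclusion.} Summing,
$$\E(L_3^2) \le \sum_{k=1}^n \frac{C'}{k} + 2C' \sum_{j=1}^{n-1}\frac{1}{j}\sum_{m=1}^{n-j}\frac{1}{m} \le C''(\log n)^2,$$
and the Minkowski step then transfers the same bound back to the bridges. The only point requiring care—and the one I expect to take up most of the actual write-up—is the uniformity of all constants over the admissible slopes and over the target values $\pi_1(j)$ and $\pi_1(k)-\pi_1(j)$ produced by conditioning. Both Lemma \ref{bridge replace} and Platonov's theorem are uniform in exactly the required ranges, so no new analytic input is needed beyond careful bookkeeping.
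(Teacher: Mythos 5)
Your proof is correct and follows essentially the same route as the paper's: reduce bridges to walks via Lemma~\ref{bridge replace}, invoke Platonov's theorem for uniform local CLT bounds, and sum over time pairs using $\E(I_jI_k)\le C/(j(k-j))$. The one variation worth noting is in how the $O(1/m)$ triple-meeting probability is obtained: the paper writes $q_m=\sum_k f_1(k)f_2(k)f_3(k)$ and estimates a Riemann sum of three Gaussian densities, whereas you condition on $\pi_1$ and apply Platonov's uniform $O(1/\sqrt{m})$ bound only to $f_2$ and $f_3$, leaving $f_1$ to integrate to $1$; this is cleaner, sidesteps the Riemann-sum step, and also handles walks with distinct starting points automatically (the paper treats that as a separate final case).
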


\begin{proof}
By a similar argument to \eqref{bridge replace inequality for L} with $L$ replaced by $L_3$, we can replace ``Bernoulli bridges'' with ``Bernoulli walks'' with the same slopes. Assume first that all three paths start at the same position (without loss of generality this initial position is 0). Let
$$q_k=\P(\pi_1(k)=\pi_2(k)=\pi_3(k)).$$

We first show that $q_n=O(1/n)$. For each $i$, let $f_i$ be the probability mass function of $\pi_i$ at time $n$. Then 
\begin{equation}\label{prob of triple intersection}
q_n=\sum_{k=0}^n f_1(k)f_2(k)f_3(k).
\end{equation}

By Platonov's theorem, we have, for each $i$,
$$f_i(k) \le \frac{1}{\sqrt{n p_i (1-p_i)}} \phi\left(\frac{k-np_i}{\sqrt{np_i(1-p_i)}}\right)+\frac{C}{np_i(1-p_i)},$$

so after expanding the product in each term of \eqref{prob of triple intersection}, the main order term is 
\begin{eqnarray*}
&\le& \frac{C}{n^{3/2}} \sum_{k=0}^n \phi\left(\frac{k-np_1}{\sqrt{np_1(1-p_1)}}\right) \phi\left(\frac{k-np_2}{\sqrt{np_2(1-p_2)}}\right)\phi\left(\frac{k-np_3}{\sqrt{np_3(1-p_3)}}\right)\\
&\le& \frac{C}{n} \int_{-\sqrt{n}}^{\sqrt{n}} \phi \left( \frac{x-p_1 \sqrt{n}}{\sqrt{p_1(1-p_1)}}\right) \phi \left( \frac{x-p_2 \sqrt{n}}{\sqrt{p_2(1-p_2)}}\right)\phi \left( \frac{x-p_3 \sqrt{n}}{\sqrt{p_3(1-p_3)}}\right) dx\\
&\phantom{=}&+ O\left( \frac{1}{n^{3/2}} \right)\\
&\le& \frac{C}{n}.
\end{eqnarray*}
Here we used the fact that the sum can be interpreted as a Riemann sum, and that the difference between a Riemann sum and the integral it approximates is at most a constant times the modulus of continuity of the integrand times the size of the mesh. In this case the size of the mesh is $1/\sqrt{n}$, and the modulus of continuity is bounded by a constant independent of $n$ given that $p_1,p_2,p_3 \in [1/4,3/4]$. The integral is easily seen to be bounded, using say H\"older's inequality. The lower order terms can be treated similarly: instead of having products of three Gaussian densities, we'll have products of two, one or zero Gaussian densities along with a higher exponent of $1/n$, and in all cases after we approximate the sum with an integral, the result will be $O(1/n^{3/2})$.

We then have for $n \ge 2$,
\begin{eqnarray*}
\E(L_3(\pi_1,\pi_2,\pi_3)^2)&=&\E \left(\sum_{k=1}^n \mathbbm{1}_{\{\pi_1(k)=\pi_2(k)=\pi_3(k)\}}\right)^2\\
&=&\E\left(2\sum_{1\le j<k \le n} \mathbbm{1}_{\{\pi_1(j)=\pi_2(j)=\pi_3(j)\}}\mathbbm{1}_{\{\pi_1(k)=\pi_2(k)=\pi_3(k)\}}\right.\\
&\phantom{=}&+\left. \sum_{k=1}^n \mathbbm{1}_{\{\pi_1(k)=\pi_2(k)=\pi_3(k)\}}\right)\\
&=&2\sum_{1 \le j<k \le n} q_j q_{k-j}+\sum_{k=1}^n q_k\\
&\le& C\left(\sum_{j=1}^{n-1} \frac{1}{j} \sum_{k=j+1}^n \frac{1}{k-j}+\sum_{k=1}^n \frac{1}{k}\right) \le C(\log n)^2.
\end{eqnarray*}

Finally, if the paths do not start at the same location, then we still have
$$L_3(\pi_1,\pi_2,\pi_3) \le 1+L_3(\pi_1',\pi_2',\pi_3')$$
where the $\pi_i'$ are the paths after the first intersection. The above bounds then holds for $L_3(\pi_1',\pi_2',\pi_3')$ and the required inequality follows easily.
\end{proof}

\begin{lem}\label{mgf lemma}
Let $\pi_1$ and $\pi_2$ be independent Bernoulli bridges on $[0,n]$ of slopes $p_1$ and $p_2$ respectively, and let $A,\delta>0$. Then there is a constant $C$ such that for all $p_1,p_2 \in [1/4,3/4]$,
$$\E\left[\left(1+\frac{A}{n^{1/2+\delta}}\right)^{6L(\pi_1,\pi_2)}\right] \le C$$
\end{lem}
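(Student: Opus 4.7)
The plan is to reduce the moment generating function bound to the polynomial moment estimate \eqref{moments of intersections bound} by a Taylor expansion. Writing $L := L(\pi_1,\pi_2)$, I would first use $\log(1+x) \le x$ to bound
\[
\left(1 + \frac{A}{n^{1/2+\delta}}\right)^{6L} \le \exp\!\left(\frac{6AL}{n^{1/2+\delta}}\right),
\]
then take expectations and expand the exponential term by term; the interchange with the sum is legitimate by Tonelli since every term is nonnegative. The problem thereby reduces to bounding
\[
\sum_{m=0}^{\infty} \frac{1}{m!}\left(\frac{6A}{n^{1/2+\delta}}\right)^{\!m} \E(L^m).
\]

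Next I would plug in the moment estimate $\E(L^m) \le C\, m!\, A_0^m\, n^{m/2}$ from \eqref{moments of intersections bound}, writing $A_0$ to avoid clashing with the $A$ of the statement. The $m!$ factors cancel exactly, so the sum collapses to a geometric series with common ratio $6A A_0 n^{-\delta}$. For $n$ large enough that this ratio is at most $1/2$ the sum is bounded by $2C$, while for the finitely many remaining small $n$ the deterministic bound $L \le n$ gives $(1+A/n^{1/2+\delta})^{6L} \le (1+A)^{6n}$, which is bounded on any finite range. The constant in the lemma can then be chosen as the maximum of these two quantities, uniformly in $n$.

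The step that actually requires care --- and really the whole content of the lemma --- is verifying that \eqref{moments of intersections bound} holds with constants $C$ and $A_0$ that are \emph{independent of} $m$. Tracing the derivation, these come from the $\Geo_0(q)$ moment estimate $\E(X^m) \le m!/q^m$ (obtained from the $\Exp(1)$ coupling just above that equation) together with the $m$-independent factor $2^{k+s}$ from \eqref{bridge replace inequality for L}, so both constants are genuinely absolute. This precise factorial growth in $m$, cancelling $1/m!$ exactly, is what allows control of the MGF at a point $\lambda \sim n^{-1/2-\delta}$, which sits strictly below the natural scale $n^{-1/2}$ set by the typical size of $L$.
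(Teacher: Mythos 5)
Your proof is correct, but it takes a genuinely different route from the paper. The paper proceeds by replacing the bridge expectation with a walk expectation (using convexity of $x\mapsto c^x$, which doubles the exponent from $6$ to $12$), bounding $L(\pi_1,\pi_2)\le G(\pi_1,\pi_2)$, stochastically dominating $G$ by a $\Geo_0(C/\sqrt{n})$ variable, and then \emph{computing the geometric MGF in closed form} and showing the result stays bounded. You instead jump straight to the already-proved polynomial moment bound \eqref{moments of intersections bound}, Taylor-expand the exponential, invoke Tonelli, and observe that the $m!$'s cancel to leave a convergent geometric series with ratio $O(n^{-\delta})$. Both routes rest on the same underlying input (the geometric domination of the intersection count via Lemma \ref{lower bound intersection prob}), but your version packages it once and for all in the moment bound and never touches the geometric distribution again; the paper's version computes the MGF directly and avoids the series rearrangement. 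Your approach is arguably more modular and does not need the extra convexity trick, since \eqref{moments of intersections bound} is already stated for bridges; the paper's proof is self-contained at the MGF level and gives a slightly sharper picture of exactly where the threshold $n^{-1/2}$ enters.

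One small inaccuracy in your commentary: the factor $2^{k+s}$ from \eqref{bridge replace inequality for L} is not $m$-independent when $s=m$. With $k=2$ it is $4\cdot 2^m$, so it contributes $4$ to the constant $C$ and a factor of $2$ to the base $A_0$. This does not harm your argument at all (any geometric factor in $m$ simply rescales $A_0$), but your claim that both constants in \eqref{moments of intersections bound} come out $m$-independent because that factor is $m$-independent is not quite the right reason. The right reason is that the $2^m$ is absorbed into $A_0^m$ while the genuinely $m$-independent contributions ($C=1$ from the exponential coupling in \eqref{geometric moments}, and $2^k=4$ from the bridge-to-walk step) land in $C$.
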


\begin{proof}
We can again use an argument similar to \eqref{bridge replace inequality for L} to replace bridges with walks of the same slopes, this time using convexity of the function $x \mapsto (1+A/n^{1/2+\delta})^{12x}$ instead of the triangle inequality. This will change the exponent from 6 to 12. We once again use the inequality $L(\pi_1,\pi_2) \le G(\pi_1,\pi_2)$, where $G(\pi_1,\pi_2)$ is the number of times $\pi_1$ and $\pi_2$ intersect before there is an interval of length at least $n$ on which they do not intersect. Then $G(\pi_1,\pi_2)$ follows a geometric distribution, and by Lemma \ref{lower bound intersection prob}, its parameter is at least $C/\sqrt{n}$. Since the $\Geo_0(q)$ stochastically dominates the $\Geo_0(p)$ whenever $q<p$, we may as well replace $G(\pi_1,\pi_2)$ by $X$ where $X \sim \Geo_0(C/\sqrt{n})$. The expectation is then bounded by the moment generating function of a geometric distribution, which we can explicitly compute:
\begin{eqnarray*}
\E\left[\left(1+\frac{A}{n^{1/2+\delta}}\right)^{12X}\right]&=&\sum_{j=0}^{\infty} \left(1+\frac{A}{n^{1/2+\delta}}\right)^{12j} \frac{C}{\sqrt{n}} \left(1-\frac{C}{\sqrt{n}}\right)^j\\
&=&\frac{C}{\sqrt{n}(1-(1+A/n^{1/2+\delta})^{12}(1-C/\sqrt{n}))}.
\end{eqnarray*}
Note that 
\begin{eqnarray*}
\left(1+\frac{A}{n^{1/2+\delta}}\right)^{12} \left(1-\frac{C}{\sqrt{n}}\right)&=&\left(1+o\left(\frac{1}{\sqrt{n}}\right)\right)\left(1-\frac{C}{\sqrt{n}}\right)\\
&=&1-\frac{C}{\sqrt{n}}+o\left(\frac{1}{\sqrt{n}}\right).
\end{eqnarray*}
This shows that if $n$ is large enough, then $(1+A/n^{1/2+\delta})^{12} (1-C/\sqrt{n})<1$, and the above series does indeed converge. It also follows from this calculation that
$$\frac{C}{\sqrt{n}(1-(1+A/n^{1/2+\delta})^{12}(1-C/\sqrt{n}))}=\frac{C}{C+o(1)},$$
so this quantity is bounded, and the lemma follows.
\end{proof}

\section{Proof of Theorem \ref{perturbation thm}}\label{perturbation proof section}
Let $[a,b] \subset [0,2n]$ be some interval of length $n_0$, $S$ a set of paths in $[a,b]$, and let $\widetilde{S}$ be the set of paths on $[0,2n]$ whose restriction to $[a,b]$ is in $S$. We wish to obtain an estimate for $\mu(S)$  (recall $\mu$ was defined in \eqref{mu definition}) in terms of $\P_{0,2n}(\widetilde{S})$ (recall in our notation, $\P_{0,2n}$ is bridge measure on paths started at 0 and ending at $2n$). By Property 4 of valid sets of weights we have
$$\P(e^{-c_1 \b^s} \le \omij(\b) \le e^{c_2 \b^s}) \ge 1-A(s) e^{-c_3\b^{s-1}}$$
for all $i$ and $j$. Letting $\Wc_s$ be the event that all $\omij(\b)$ satisfy 
$$e^{-c_1 \b^s} \le \omij(\b) \le e^{c_2 \b^s},$$ 
we then have by a simple union bound that
$$\P(\Wc_s)\ge 1-An^2 e^{-c_3 \b^{s-1}}.$$

Then on the event $\Wc_s$, the random probability measure $\mu$ satisfies
\begin{equation}\label{inequality for mu}
\mu(S)=\frac{\sum_{\pi \in \widetilde{S}} \prod_{\ell \notin [a,b]} \om_{\ell,\pi(\ell)}(\b)}{\sum_{\pi} \prod_{\ell \notin [a,b]} \om_{\ell,\pi(\ell)}(\b)}\le \frac{e^{c_2 \b^s(2n-n_0)} |\widetilde{S}|}{e^{-c_1 \b^s (2n-n_0)} \binom{2n}{n}} \le e^{C \b^s n} \P_{0,2n}(\widetilde{S}).
\end{equation}

We will take as $S$ the set of paths whose slope on $[a,b]$ is not in $[1/4,3/4]$:
$$S=\left\{\pi:  \frac{\pi(b)-\pi(a)}{b-a} \notin \left[ \frac{1}{4},\frac{3}{4} \right]\right\}=\left\{\pi: \left|\pi(b)-\pi(a)-\frac{n_0}{2}\right| \ge \frac{n_0}{4}\right\}.$$

Then with $\P_{1/2,0}$ denoting the usual Bernoulli random walk measure on $[0,2n]$ started at 0 with mean $1/2$, we have by Stirling's formula and Hoeffding's inequality,
$$\P_{0,2n}(\widetilde{S})=\frac{|\widetilde{S} \cap \{\pi: \pi(2n)=n\}|}{\binom{2n}{n}}\le C\sqrt{n}\,\P_{1/2,0}(\widetilde{S}) \le C\sqrt{n}e^{-Cn_0},$$
and so together with \eqref{inequality for mu},
\begin{equation}\label{bound for mu of S}
\mu(S) \le C\sqrt{n}e^{C (\b^s n-n_0)}
\end{equation}
on $\Wc_s$, with $C$ independent of $s$ and $n$. Now, $\b^s n=n^{1-s\alpha}$ and $n_0=\b^{-4/(1+4\delta)}=n^{4\alpha/(1+4\delta)}$, so as long as
$$1-s\alpha<\frac{4\alpha}{1+4\delta},$$
$\mu(S)$ will be exponentially decreasing in some power of $n$. If $\alpha>1/5$, then we can find a $\delta>0$ and a $0<s<1$ such that the above inequality holds. To summarize, we have shown that the probability of obtaining a configuration of weights for which the random measure $\mu$ assigns exponentially small probability to paths with slopes not in $[1/4,3/4]$ is exponentially close to 1. We assume for the rest that $s$ has been fixed to satisfy the above, and we omit the subscript in $\Wc_s$.

We now estimate the moments of $L(\pi_1,\dots,\pi_k)$ for $\pi_1, \dots, \pi_k$ independent and distributed according to $\mu$. Recall inequality \eqref{basic inequality for L}:
$$L(\pi_1, \dots, \pi_k) \le \sum_{1\le j< i \le k} L(\pi_i,\pi_j).$$
There are $\binom{k}{2}$ terms on the right, and they clearly all have the same distribution, so by the triangle inequality for $L^m$ norms, we have for any $m \ge 1$,
$$
\E_{\mu}(L(\pi_1, \dots, \pi_k)^m) \le \left(\sum_{1 \le j<i \le k} \E_{\mu}(L(\pi_i,\pi_j)^m)^{1/m}\right)^m
=\binom{k}{2}^m\E_{\mu}(L(\pi_1,\pi_2)^m).
$$
To estimate the expectation, we condition on the endpoints of $\pi_1$ and $\pi_2$. Let 
\begin{eqnarray*}
\Gc&=&\{(x,y): (y-x)/(b-a) \in [1/4,3/4]\}\\
\Bc&=&\{(x,y): (y-x)/(b-a) \notin [1/4,3/4]\}.
\end{eqnarray*}
Thus $\Gc$ are the points such that the line joining $(a,x)$ to $(b,y)$ has slope in $[1/4,3/4]$ (the ``good endpoints'') and $\Bc$ are those for which the line has slope not in $[1/4,3/4]$ (the ``bad endpoints''). We therefore have
\begin{eqnarray*}
\E_{\mu}(L(\pi_1,\pi_2)^m)
&=&\sum_{x_1 \in \Bc \text{ or } x_2 \in \Bc} \E_{x_1,x_2}^{[a,b]}(L(\pi_1,\pi_2)^m) \mu(\{\pi_1(a,b)=x_1, \pi_2(a,b)=x_2\})\\
&\phantom{=}&+\sum_{x_1 \in \Gc \text{ and } x_2 \in \Gc} \E_{x_1,x_2}^{[a,b]}(L(\pi_1,\pi_2)^m) \mu(\{\pi_1(a,b)=x_1, \pi_2(a,b)=x_2\})\\
&=&\Sigma_{\Bc}+\Sigma_{\Gc}.
\end{eqnarray*}
For the first term, we use the trivial bound $L(\pi_1,\pi_2) \le n_0$ and the fact that the set of paths with bad endpoints has small $\mu$ measure to obtain
$$\Sigma_{\Bc} \le Cn_0^m e^{-n_0^{\gamma}}$$
for some $\gamma>0$. For the second term, we use \eqref{moments of intersections bound}:
$$\E_{x_1,x_2}^{[a,b]}(L(\pi_1,\pi_2)^m) \le C m! A^m n_0^{m/2}$$
where $C$ and $A$ are constants which do not depend on $x_1$ or $x_2$. It follows that
$$\Sigma_{\Gc}\le C m! A^m n_0^{m/2}$$
and so for $n_0$ sufficiently large,
$$\E_{\mu}(L(\pi_1, \dots, \pi_k)^m)\le \binom{k}{2}^m\E_{\mu}(L(\pi_1,\pi_2)^m)\le C m! (Ak^2)^m n_0^{m/2}.$$
Combining this with Theorem \ref{centred moments Z}, we obtain
\begin{eqnarray*}
\E((Z_{\mu}(\b)-1)^k) &\le& \left(\sum_{j=1}^{n_0} (C_k)^j \b^j \left[\frac{\E_{\mu}(L(\pi_1,\dots,\pi_k)^{kj/2})}{(kj/2)!}\right]^{1/k}\right)^k\\
&\le& \left(\sum_{j=1}^{n_0} (C_k)^j \b^j C^{1/k}(Ak^2)^{j/2} n_0^{j/4} \right)^k \le \left(\frac{C_k \b n_0^{1/4}}{1-C_k \b n_0^{1/4}}\right)^k.
\end{eqnarray*}
We had chosen $n_0$ so that $\b=n_0^{-(1/4+\delta)}$, and so if $n_0$ is large enough, then
\begin{equation}\label{centred moments Z 2}
\E((Z_{\mu}(\b)-1)^k)\le \frac{C_k}{n_0^{k\delta}}.
\end{equation}
Recall that $Z_{\mu}$ was given by $Z_{n,\b}/\Zc$ in terms of the random measure $\mu$:
$$Z_{\mu}(\b)=\sum_{\pi \in \Pi[a,b]} \mu(\pi) \prod_{\ell \in [a,b]} \om_{\ell,\pi(\ell)}(\b)=\frac{Z_{n,\b}}{\Zc}.$$
The constants $C_k$ in \eqref{centred moments Z 2} are uniform over any configuration of weights in $\Wc$.

By \eqref{taylor bound 2}, we need an estimate for $\E(W_n^3/Z_n^3)$. We do so by conditioning on the weights outside of the strip 
$$\Delta=\{(i,j):a \le i \le b, 0 \le j \le 2i\}.$$ 
Note that $\Delta$ is the set of locations in the plane that could be visited by a path. When the weights are not in $\Wc$, then $W_n/Z_n \le 1$, and the probability of getting a configuration not in $\Wc$ is $\le Ce^{-Cn^{\gamma}}$ for some $\gamma>0$. When the weights are in $\Wc$, \eqref{centred moments Z 2} together with Markov's inequality imply that
\begin{equation}\label{conditional expectation of W over Z cubed}
\begin{split}
\E\left( \left. \frac{W_n^3}{Z_n^3} \right| \om \right)&=\E\left( \left. \frac{(W_n/\Zc)^3}{(Z_n/\Zc)^3} \right| \om \right)\\
&=\E\left( \left. \frac{(W_n/\Zc)^3}{(Z_n/\Zc)^3}\mathbbm{1}_{\{|Z_n/\Zc-1|< 1/2\}} \right| \om \right)\\
&\phantom{=}+\E\left( \left. \frac{(W_n/\Zc)^3}{(Z_n/\Zc)^3}\mathbbm{1}_{\{|Z_n/\Zc-1|\ge 1/2\}} \right| \om \right)\\
&\le 8\E\left(\left. \frac{W_n^3}{\Zc^3} \right| \om \right)+\P\left(\left. \left|\frac{Z_n}{\Zc}-1\right| \ge \frac{1}{2}\right| \om \right)\\
&\le 8\E\left(\left. \frac{W_n^3}{\Zc^3} \right| \om \right)+\frac{C_k}{n_0^{k\delta}},
\end{split}
\end{equation}
where we use $\om$ to denote a configuration of the weights outside of $\Delta$, and such that $\om \in \Wc$. This holds for any even $k$, so by picking $k$ large enough, we can make the second term of smaller order than the first.

We can express $W_n/\Zc$ in terms of $\mu$ in a similar way to $Z_n/\Zc$ by only summing over the paths that go through the point $(i,j)$:
\begin{equation}\label{W formula}
\frac{W_n}{\Zc}=\sum_{\pi, (i,j) \in \pi} \mu(\pi)\prod_{\ell \in [a,b]} \om_{\ell, \pi(\ell)}(\b).
\end{equation}
So we now cube this expression, sum over all points $(i,j)$ in the strip $\Delta$ and take expectations to get a bound for \eqref{conditional expectation of W over Z cubed}. There are $O(n \cdot n_0)$ points in $\Delta$, so the second term in \eqref{conditional expectation of W over Z cubed} will be $C_k n/n_0^{k\delta-1}$, which again we can make as small as we want by choosing a large enough $k$. The precise estimate for the first term is contained in the following lemma.

\begin{lem}
There is a constant $C$ such that for any configuration $\om \in \Wc$,
$$\E \left( \left. \sum_{(i,j) \in \Delta}\frac{W_n(i,j)^3}{\Zc^3}\right| \om \right) \le C \log n.$$
\end{lem}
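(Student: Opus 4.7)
My plan is to expand the cube using \eqref{W formula} as a triple sum over paths $\pi_1,\pi_2,\pi_3 \in \Pi[a,b]$ all passing through $(i,j)$, and then swap the order of summation so that
$$\sum_{(i,j) \in \Delta}\frac{W_n(i,j)^3}{\Zc^3} = \sum_{\pi_1,\pi_2,\pi_3} \mu(\pi_1)\mu(\pi_2)\mu(\pi_3)\, L_3(\pi_1,\pi_2,\pi_3)\prod_{\ell \in [a,b]}\om_{\ell,\pi_1(\ell)}\om_{\ell,\pi_2(\ell)}\om_{\ell,\pi_3(\ell)},$$
where $L_3$ is the simultaneous triple intersection count from Lemma \ref{triple intersection lemma}. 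Since $\mu$ depends only on the outer weights while the $\om_{\ell,\cdot}$ with $\ell \in [a,b]$ are independent across columns, taking $\E[\cdot|\om]$ factorizes the product: each factor equals $1$, $\E(\om^2)$, or $\E(\om^3)$ according to whether the three paths visit $3$, $2$, or $1$ distinct heights at column $\ell$. By Condition 3 for valid weights, both non-trivial values are at most $1+C\b^2$, so in combination with \eqref{basic inequality for L} the entire product is bounded by $(1+C\b^2)^{L_{12}+L_{13}+L_{23}}$, where $L_{ij}=L(\pi_i,\pi_j)$.

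Applying Cauchy--Schwarz and then H\"older's inequality reduces the task to bounding
$$\E_\mu[L_3^2]^{1/2} \quad \text{and} \quad \E_\mu\bigl[(1+C\b^2)^{6L_{ij}}\bigr]^{1/6} \text{ for each pair } i<j.$$
For each of these expectations, I decompose $\E_\mu$ according to whether all three bridges have endpoints in the good set $\Gc$ (slopes in $[1/4,3/4]$) or at least one has endpoints in the bad set $\Bc$, exactly as in the proof of Theorem \ref{perturbation thm}. On the good set, conditioning on the endpoints and applying Lemma \ref{triple intersection lemma} gives $\E[L_3^2] \le C(\log n_0)^2$, while Lemma \ref{mgf lemma} (applied with $\b^2 = n_0^{-1/2-2\delta}$ playing the role of $A/n^{1/2+\delta}$) gives each MGF factor $\le C$, both bounds uniform over good endpoints.

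On the bad set, the crude bounds $L_3 \le n_0$ and $L_{ij} \le n_0$ yield a potentially large factor $(1+C\b^2)^{6n_0} \le \exp(Cn_0^{1/2-2\delta})$, but this is dominated by the exponential decay of $\mu$-mass on bad endpoints, which \eqref{bound for mu of S} shows to be at most $e^{-Cn_0}$ on $\Wc$. Combining all contributions then yields the required $O(\log n_0) \cdot O(1) = O(\log n)$ bound.

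The main technical hurdle is precisely this last comparison: one needs the exponential decay of $\mu(\Bc)$ to genuinely beat the exponential loss from the trivial bound on $L_{ij}$, which is exactly what forces the lower bound $\alpha>1/5$ through the choice of parameters $s<1$ and $\delta>0$ used in \eqref{bound for mu of S}. Everything else is a straightforward reduction to the pair intersection estimates of Section 5.
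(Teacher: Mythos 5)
Your proposal follows the paper's proof essentially step for step: expanding the cube via \eqref{W formula}, swapping sums to produce the $L_3$ factor, factorizing the conditional expectation column by column using Condition 3 to bound each nontrivial factor by $1+C\b^2$, bounding the exponent by $L_{12}+L_{13}+L_{23}$ via \eqref{basic inequality for L}, applying Cauchy--Schwarz/H\"older to separate $\E_\mu[L_3^2]$ from the MGF factors, passing to bridge measure on good endpoints via the $\Gc/\Bc$ decomposition, and invoking Lemma \ref{triple intersection lemma} and Lemma \ref{mgf lemma} on $\Gc$ while on $\Bc$ letting the exponential decay of $\mu$-mass from \eqref{bound for mu of S} dominate the crude factor $(1+C\b^2)^{6n_0}\le e^{C\sqrt{n_0}}$. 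One small caveat: the constraint $\alpha>1/5$ is not really forced by the need to beat the MGF blow-up on $\Bc$ (that blow-up is only $e^{Cn_0^{1/2}}$, which would be beaten by any stretched-exponential decay of $\mu(\Bc)$ with exponent above $1/2$); rather, $\alpha>1/5$ is what makes \eqref{bound for mu of S} give any useful decay at all. But since you refer to the parameter choice in \eqref{bound for mu of S}, the conceptual point is essentially right, and the argument is correct.
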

\begin{proof}
Let $\zeta_{i,j}(\b)=\om_{i,j}(\b)-1$. Then using \eqref{W formula}, we get, after summing over $i,j$ in the strip:
\begin{eqnarray*}
&\E& \left( \left. \sum_{(i,j) \in \Delta}\frac{W_n(i,j)^3}{\Zc^3}\right| \om \right)\\
&=&\E \left( \left. \sum_{(i,j) \in \Delta} \left(\sum_{\pi \in \Pi[a,b], (i,j) \in \pi} \mu(\pi)\prod_{\ell \in [a,b]} (1+\zeta_{\ell, \pi(\ell)}(\b)) \right)^3 \right| \om \,\,\right)\\
&=&\E\left( \sum_{(i,j) \in \Delta} \sum_{\pi_1,\pi_2,\pi_3 \in \Pi[a,b]} \mu(\pi_1)\mu(\pi_2)\mu(\pi_3) \mathbbm{1}_{\{(i,j) \in \pi_1,\pi_2,\pi_3\}}\right. \\
&\phantom{=}&\times \left. \left.\prod_{\ell \in [a,b]}(1+\zeta_{\ell,\pi_1(\ell)}(\b))(1+\zeta_{\ell,\pi_2(\ell)}(\b))(1+\zeta_{\ell,\pi_3(\ell)}(\b))\right| \om \right)\\
&=&\E\left( \sum_{\pi_1,\pi_2,\pi_3 \in \Pi[a,b]} \mu(\pi_1)\mu(\pi_2)\mu(\pi_3)L_3(\pi_1,\pi_2,\pi_3)\right.\\
&\phantom{=}&\times \left. \left.\prod_{\ell \in [a,b]}(1+\zeta_{\ell,\pi_1(\ell)}(\b))(1+\zeta_{\ell,\pi_2(\ell)}(\b))(1+\zeta_{\ell,\pi_3(\ell)}(\b))\right| \om \right).
\end{eqnarray*}
We now swap the sum with the expectation. Since the $\zeta_{i,j}$ are independent of each other and of $\om$ and have mean 0, the conditional expectation of the product
\begin{equation}\label{one product term}
\E[(1+\zeta_{\ell,\pi_1(\ell)}(\b))(1+\zeta_{\ell,\pi_2(\ell)}(\b))(1+\zeta_{\ell,\pi_3(\ell)}(\b))\, | \om]
\end{equation}
will be 1 if the paths $\pi_1,\pi_2$ and $\pi_3$ are all at different locations at time $\ell$; it will be $1+\E(\zeta_{\ell,j}(\b)^2)$ if exactly two paths intersect at time $\ell$ (at location $j$) and it will be $1+3\E(\zeta_{\ell,j}(\b)^2)+\E(\zeta_{\ell,j}(\b)^3)$ if all three paths intersect at time $\ell$ (at location $j$). By Condition 3 of valid sets of weights, we have $\E|\zeta_{i,j}^k| \le C_k \b^k$, so the largest that \eqref{one product term} can be if it isn't 1 is $1+C\b^2$. The number of terms which aren't 1 when taking the product over the $\ell \in [a,b]$ is exactly $L(\pi_1,\pi_2,\pi_3)$, and so we obtain
\begin{eqnarray*}
&\E& \left( \left. \sum_{(i,j) \in \Delta}\frac{W_n(i,j)^3}{\Zc^3}\right| \om \right)\\
&\le& \sum_{\pi_1,\pi_2,\pi_3 \in \Pi[a,b]} \mu(\pi_1)\mu(\pi_2)\mu(\pi_3) L_3(\pi_1,\pi_2,\pi_3) (1+C\b^2)^{L(\pi_1,\pi_2,\pi_3)}\\
&=&\E_{\mu}\left(L_3(\pi_1,\pi_2,\pi_3)(1+C\b^2)^{L(\pi_1,\pi_2,\pi_3)}\right).
\end{eqnarray*}
Using the bound $L(\pi_1,\pi_2,\pi_3) \le L(\pi_1,\pi_2)+L(\pi_1,\pi_3)+L(\pi_2,\pi_3)$, that each of these have the same law and H\"older's inequality, we get
\begin{eqnarray*}
&\E_{\mu}&\left(L_3(\pi_1,\pi_2,\pi_3)(1+C\b^2)^{L(\pi_1,\pi_2,\pi_3)}\right)\\
&\le& \E_{\mu} \left(L_3(\pi_1,\pi_2,\pi_3)(1+C\b^2)^{L(\pi_1,\pi_2)+L(\pi_1,\pi_3)+L(\pi_2,\pi_3)}\right)\\
&\le& \left(\E_{\mu}(L_3(\pi_1,\pi_2,\pi_3)^2)\right)^{1/2}\left(\E_{\mu}(1+C\b^2)^{6L(\pi_1,\pi_2)}\right)^{1/2}.
\end{eqnarray*}
Since $\om \in \Wc$, we can replace expectation with respect to the random measure $\mu$ by expectation with respect to bridge measure (by doing a similar breaking down of paths into those with good versus bad endpoints). Since $\b^2=o(1/\sqrt{n_0})$, the exponential term $(1+C\b^2)^{6L(\pi_1,\pi_2)}$ is at worst 
$$(1+C\b^2)^{6n_0} \le Ce^{C\sqrt{n_0}}$$
on the set of paths with bad endpoints, but by \eqref{bound for mu of S}, the $\mu$ measure of those paths with bad endpoints is exponentially decreasing in $n_0$, so this term doesn't cause any problem. After this switch to bridge measure, the first term is $O(\log n)$ by Lemma \ref{triple intersection lemma}, and the second term is $O(1)$ by Lemma \ref{mgf lemma}. 
\end{proof}
We have therefore shown that 
$$\sum_{(i,j) \in \Delta} \E\left( \frac{W_n^3}{Z_n^3} \right) \le C\log n +\frac{C_k}{n_0^{k\delta-2}}.$$
The exact same estimate also holds if we replace $Z_n$ by $W_n+V_n$. Indeed, $W_n+V_n$ is the partition function for the polymer where all the weights are the same except for the $(i,j)$ point where the weight has been replaced by 1. But as discussed in Remark \ref{combination of valid weights}, it's possible to pick the constants in the definition of valid sets of weights so that the inequalities in the definition hold for any combination of weights in a finite collection of sets of weights. All the estimates were obtained in terms of those constants, so the rest of the calculations work for $W_n+V_n$ as well. Therefore plugging these estimates in \eqref{taylor bound 2} and summing over all $(i,j)$ in the strip,
$$
\sum_{(i,j) \in \Delta}\E\left|f\left(\frac{\log(V_n+\omij W_n)-a_n}{\sigma}\right)-f\left(\frac{\log(V_n+\omij' W_n)-a_n}{\sigma}\right)\right|
\le \frac{C \b^3}{\sigma} \log n.
$$
Finally, we break up the entire range $[0,2n]$ into strips of size $n_0$, and sum the errors over all the strips. It should be noted that in all the work that has been done, the location of the strip was irrelevant; what mattered was its width. Thus the same estimates and constants will work for any of the strips. There are $2n/n_0$ different strips, and so the total error is 
\begin{equation}\label{final error estimate}
\E\left|f\left(\frac{\log Z_n-a_n}{\sigma}\right)-f\left(\frac{\log Z'_n-a_n}{\sigma}\right)\right| \le \frac{C\b^3 n \log n}{\sigma n_0}.
\end{equation}
Note that if $2n/n_0$ is not an integer, then we can just add some overlapping strips to cover the whole of $[0,2n]$. Some points may be contained in two different strips, but this just means we add the errors for those points twice, and so all this does is add an extra factor of 2 in the above bound.

We now express $\b, n_0$ and $\sigma$ in terms of $n$ to conclude. We had $\b=n^{-\alpha}$, $n_0=n^{4\alpha/(1+4\delta)}$ and $\sigma=\b^{4/3}n^{1/3}$. So then
$$\frac{\b^3 n\log n}{\sigma n_0}=\frac{\b^3 n\log n}{\b^{4/3}n^{1/3}n_0}=\frac{\b^{5/3}n^{2/3}\log n}{n^{4\alpha/(1+4\delta)}}=n^{\lambda}\log n$$
where
$$\lambda=-\frac{5\alpha}{3}+\frac{2}{3}-\frac{4\alpha}{1+4\delta}=\frac{(2-17\alpha)+8\delta-20\alpha \delta}{3(1+4\delta)}.$$
Since $\alpha>1/5$, we can find a small enough $\delta>0$ such that $\lambda<0$. Then the right-hand side of \eqref{final error estimate} goes to 0 as $n \to \infty$, and this shows that the limiting scaled distributions for $\log Z_n$ and $\log Z'_n$ are the same. This concludes the proof of Theorem \ref{perturbation thm}!

\begin{remark}\label{alpha for higher moments}
The above calculation seems to indicate that Theorem \ref{perturbation thm} should hold for any $\alpha>2/17$. In fact, if we strengthen our hypotheses and assume that the weights $\omij(\b)$ and $\omij'(\b)$ have the same first $k$ moments, then we can do all the calculations of this section with a Taylor expansion of order $k+1$ instead. The end result will be the same estimate as \eqref{final error estimate} but with $\b^3$ replaced by $\b^{k+1}$. After substituting the values of $\b$ and $n_0$ in terms of $n$ in this expression and simplifying, one obtains a bound which goes to 0 provided that
$$\alpha>\frac{2}{3k+11}.$$
This for now is only a conjecture, since we are at the moment limited to $\alpha>1/5$ by \eqref{bound for mu of S}.
\end{remark}

\section{Tracy--Widom fluctuations}\label{Tracy-Widom fluctuations section}

In order to apply Theorem \ref{perturbation thm} to prove Theorem \ref{TW thm}, we need to compare the moments of the weights $e^{\b \xi_{i,j}}$ with some valid set of weights $\omij(\b)$ for which we know that the scaled fluctuations converge in law to the Tracy--Widom distribution. We will take the weights that appear in the log-gamma polymer.

Let $\theta>0$. The log-gamma polymer is the directed polymer with weights $1/X_{i,j}$, where the $X_{i,j}$ are i.i.d with the $\text{Gamma}(\theta,1)$ distribution, that is they have density
$$f(x)=\frac{1}{\Gamma(\theta)} x^{\theta-1} e^{-x} \quad (x>0).$$
The partition function is defined in the same way as before
$$Z_{n,\theta}=\sum_{\pi} \prod_{i=0}^{2n} \frac{1}{X_{i,\pi(i)}},$$
and the free energy is $\log Z_{n,\theta}$. Here we think of $\theta$ as a positive parameter which plays a similar role to the inverse temperature $\b$. In our case, we will take $\theta=1/\b^2$, with $\b=n^{-\alpha}$ for some $1/5<\alpha<1/4$.

This model was introduced by Sepp\"al\"ainen in \cite{seppa12}. He showed that with special boundary conditions and fixed $\theta$, the scaled free energy $(\log Z_{n,\theta})/2n$ converges almost surely to $-\Psi(\theta/2)$, and the fluctuations converge in distribution to the Tracy--Widom GUE distribution:
\begin{equation}\label{log-gamma limit}
\frac{\log Z_{n,\theta}+2n\Psi(\theta/2)}{-(\Psi''(\theta/2))^{1/3}n^{1/3}} \xrightarrow{d} TW_{GUE}.
\end{equation}
Here $\Psi(x)=\Gamma'(x)/\Gamma(x)$ is the digamma function. The special conditions on the boundary were removed by Borodin, Corwin and Remenik in \cite{bor-cor-rem13}, who also showed that \eqref{log-gamma limit} holds when $\theta$ depends on $n$ and converges to a finite limit (or in terms of $\b$, when $\b$ depends on $n$ and converges to a non-zero limit). Then in \cite[Theorem 2.1]{krish-qua18}, Krishnan and Quastel extended this to the intermediate disorder regime, where now $\theta \sim c/\b^2$ for some positive constant $c$, with $\b = n^{-\alpha}$ for some $\alpha<1/4$.

The digamma function satisfies the following asymptotics:
$$\Psi(x)=\log x-\frac{1}{2x}-\frac{1}{12x^2}+O\left(\frac{1}{x^4}\right), \qquad -\Psi''(x) \sim \frac{1}{x^2}$$
as $x \to \infty$ (see \cite[Chapter 6]{formulas92}). So \eqref{log-gamma limit} can be rewritten as
\begin{equation}\label{log-gamma limit 2}
\theta^{2/3}\left(\frac{\log Z_{n,\theta}+2n(\log \theta-\log 2-\frac{1}{\theta}-\frac{1}{3\theta^2})}{4^{1/3}n^{1/3}}\right) \xrightarrow{d} TW_{GUE}.
\end{equation}
The error term $O(1/\theta^4)$ for $\Psi(\theta/2)$ can be thrown away in our case since we are considering $1/5<\alpha<1/4$ and $\theta \sim c/\b^2=c n^{2\alpha}$. 

We now show that the weights in the standard polymer and the log-gamma polymer (renormalized appropriately) are both valid.

\begin{theo}
The following two parametrizations of weights are valid.
\begin{longlist}
\item[1.] $\omij(\b)=\psi_{i,j}(\b)^{-1}e^{\b \xi_{i,j}}$, where the $\xi_{i,j}$ are independent, have a uniform exponential tail and $\psi_{i,j}$ is the moment generating function of $\xi_{i,j}$.
\item[2.] $\omij(\b)=(\theta-1)/X_{i,j}$, where $\theta=\theta(\b) \sim \sigma^2/\b^2$ as $\b \to 0$ and $X_{i,j}$ are i.i.d with $X_{i,j} \sim \text{Gamma}(\theta,1)$.
\end{longlist}
\end{theo}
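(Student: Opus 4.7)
The plan is to verify the four defining conditions of validity separately for each parametrization. Conditions 1 and 2 are essentially immediate. In case (1), $e^{\beta \xi_{i,j}} > 0$ and normalization by the moment generating function $\psi_{i,j}(\beta)$ yields $\E(\omega_{i,j}(\beta)) = 1$. In case (2), $\theta(\beta) > 1$ for all sufficiently small $\beta$, and the standard Gamma identity $\E(X_{i,j}^{-1}) = 1/(\theta - 1)$ delivers mean $1$.

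For case (1), I would write $\omega_{i,j}(\beta) - 1 = \psi_{i,j}(\beta)^{-1}(e^{\beta \xi_{i,j}} - \psi_{i,j}(\beta))$. The exponential moment hypothesis forces $\psi_{i,j}(\beta) = 1 + O(\beta)$ uniformly, so the denominator is harmless and the correction $\psi_{i,j}(\beta) - 1 = O(\beta)$ contributes $O(\beta^k)$ to Condition 3. The bulk of the work is to bound $\E|e^{\beta \xi_{i,j}} - 1|^k$, which I would do by splitting on $\{|\xi_{i,j}| \le 1/\beta\}$: on this set, the elementary inequality $|e^{\beta x} - 1| \le |\beta x|e^{\beta |x|}$ yields a pointwise bound $\le e^k \beta^k |\xi_{i,j}|^k$, while the complementary set has super-polynomially small probability by Markov applied to $e^{c|\xi_{i,j}|}$. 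Condition 4 reduces to the same tail bound: $\P(|\xi_{i,j}| \ge \beta^{s-1}) \le A(s) e^{-c \beta^{s-1}}$, and on the complement $\beta |\xi_{i,j}| \le \beta^s$ while the $|\log \psi_{i,j}(\beta)| = O(\beta)$ correction is absorbed into $c_1, c_2$.

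For case (2), the starting point is the closed-form Gamma moment $\E(X_{i,j}^{-k}) = \Gamma(\theta - k)/\Gamma(\theta)$, valid for $\theta > k$, which for large $\theta$ behaves like $\theta^{-k}(1 + O(1/\theta))$. To verify Condition 3, I rewrite $\omega_{i,j}(\beta) - 1 = ((\theta - 1) - X_{i,j})/X_{i,j}$ and split the expectation of $|\omega_{i,j}(\beta) - 1|^k$ according to the typical event $\{|X_{i,j} - \theta| \le \theta/2\}$. On this event, $X_{i,j} \ge \theta/2$, and the classical centered Gamma moment bound $\E|X_{i,j} - \theta|^k \le C_k \theta^{k/2}$ yields a contribution of order $\theta^{-k/2} \sim \beta^k$. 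On the atypical event, the Chernoff bound $\P(|X_{i,j} - \theta| \ge \theta/2) \le e^{-c\theta}$ combined with H\"older against $\E(X_{i,j}^{-2k})$ (bounded by $C\theta^{-2k}$) shows the contribution is super-polynomially small in $\beta$. For Condition 4, with $\delta = C\beta^s$, the Chernoff estimates $\P(X_{i,j} \ge \theta(1+\delta)) \le e^{-\theta(\delta - \log(1+\delta))}$ and its lower-tail analogue give a bound of $C e^{-c\theta \beta^{2s}} = C e^{-c\beta^{2s-2}}$, which for $s < 1$ is strictly stronger than the required $e^{-c\beta^{s-1}}$ tail. On the complement, one checks by an elementary computation that $\omega_{i,j}(\beta) \in [e^{-c_1 \beta^s}, e^{c_2 \beta^s}]$, absorbing the difference between $\theta$ and $\theta - 1$.

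The main obstacle is the atypical-event piece of Condition 3 in case (2), where the singularity of $1/X_{i,j}$ near zero must not spoil the moment estimate. This is handled by exploiting the factor $x^{\theta - 1}$ in the Gamma density, which suppresses small values of $X_{i,j}$ strongly enough to beat any negative power of $x$ once $\theta$ is large; keeping the constants $C_k$ uniform in $\beta$ requires some care but no new ideas. Everything else reduces to routine Taylor expansion and standard Gamma-distribution asymptotics.
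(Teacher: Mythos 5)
Your proposal is correct, and the overall structure (verify the four conditions one at a time for each family of weights, with Conditions 1 and 2 being immediate) matches the paper. The interesting divergences are in how you handle Condition 3.

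For case (1), you truncate on $\{|\xi_{i,j}|\le 1/\b\}$ and use the mean value theorem on the typical set, then appeal to the exponential tail on the complement. The paper avoids any truncation by absorbing $|\xi_{i,j}|$ into an exponential via $|\xi_{i,j}|\le (2k/c)e^{c|\xi_{i,j}|/2k}$, which gives $\E|e^{\b\xi_{i,j}}-1|^k\le C_k\b^k\E(e^{c|\xi_{i,j}|})$ in one line for $\b<c/2k$. Both work; your version is slightly longer because you must still control the product of $|e^{\b\xi_{i,j}}-1|^k$ (which can be large) against the small tail probability, which needs a Cauchy--Schwarz step you do not spell out, but that is routine. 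For case (2), your route is genuinely different: you truncate on $\{|X_{i,j}-\theta|\le\theta/2\}$, use the cumulant-scaling centered moment bound $\E|X_{i,j}-\theta|^k\le C_k\theta^{k/2}$ on the typical event, and Chernoff plus H\"older on the atypical event. The paper instead applies Cauchy--Schwarz directly to $\E|(\theta-1-X)/X|^k$ without any truncation, computes $\E(X^{-2k})$ explicitly from the Gamma formula, and bounds $\E(\theta-1-X)^{2k}$ by viewing $X$ as the time-$\theta$ value of a Gamma L\'evy process and invoking Burkholder--Davis--Gundy on the martingale $t-X_t$. Your approach avoids BDG and is more elementary, at the cost of the two-case split; the paper's is a one-step computation but leans on a heavier tool. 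Your Condition 4 verification by Chernoff matches the paper, and your observation that the exponent actually comes out as $\b^{2(s-1)}$, stronger than the required $\b^{s-1}$, is exactly what the paper obtains as well.
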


\begin{proof}
1. Properties 1 and 2 trivially hold. Here ``uniform exponential tail'' means there are constants $C,c>0$ such that for all $i,j$,
$$\P(|\xi_{i,j}|\ge \lambda) \le Ce^{-c\lambda}.$$
Therefore, for all $i,j$,
\begin{eqnarray*}
\psi_{i,j}(\b)&=&\E(e^{\b \xi_{i,j}}) \le \E(e^{\b|\xi_{i,j}|})=\int_0^{\infty} \P(e^{|\b \xi_{i,j}|}>t)dt\\
&=&1+\int_1^{\infty} \P\left(|\xi_{i,j}|>\frac{\log t}{\b}\right)dt \le 1+\int_1^{\infty} Ce^{-c\frac{\log t}{\b}} dt\\
&=&1+\frac{C\b}{c-\b},
\end{eqnarray*}
and
\begin{eqnarray*}
\psi_{i,j}(\b)&\ge& \E(e^{-\b|\xi_{i,j}|})=\int_0^{\infty} \P(e^{-|\b \xi_{i,j}|}>t)dt\\
&=&1-\int_0^1 \P\left(|\xi_{i,j}|\ge-\frac{\log t}{\b}\right)dt \ge 1-\int_0^1 Ce^{c\frac{\log t}{\b}} dt\\
&=&1-\frac{C\b}{c+\b}.
\end{eqnarray*}
These two inequalities imply that there is a positive constant $C$ such that for all $i,j$ and all sufficiently small $\b$,
\begin{equation}\label{psi inequality}
|\psi_{i,j}(\b)-1| \le C\b.
\end{equation}
In particular, for sufficiently small $\b$, we have $\psi_{i,j}(\b) \ge 1/2$ for all $i,j$. Therefore, by the triangle inequality for $L^k$ norms,
\begin{equation}\label{property 3 standard polymer}
\begin{split}
\E(|\om_{i,j}-1|^k)&=\frac{\E|e^{\b \xi_{i,j}}-\psi_{i,j}(\b)|^k}{\psi_{i,j}(\b)^k}\le 2^k \E|e^{\b \xi_{i,j}}-\psi_{i,j}(\b)|^k\\
&\le 2^k \left( (\E|e^{\b \xi_{i,j}}-1|^k)^{1/k}+(\E|\psi_{i,j}(\b)-1|^k)^{1/k} \right)^k\\
&\le 2^k \left( (\E|e^{\b \xi_{i,j}}-1|^k)^{1/k}+C\b \right)^k.
\end{split}
\end{equation}
By the mean value theorem, $|e^{\b \xi_{i,j}}-1| \le \b|\xi_{i,j}| e^{\b|\xi_{i,j}|}$. Since 
$$|\xi_{i,j}| \le \frac{2k}{c} e^{\frac{c|\xi_{i,j}|}{2k}},$$
it follows that for $\b<c/2k$, we have
$$
\E|e^{\b \xi_{i,j}}-1|^k\le \b^k \E(|\xi_{i,j}|^k e^{\b k|\xi_{i,j}|})\le \frac{2^k k^k}{c^k}\b^k \E(e^{c|\xi_{i,j}|})\le C_k \b^k
$$
where $C_k$ depends on $k$ but not $i$ or $j$. Plugging this back in \eqref{property 3 standard polymer} then gives Property 3.

After taking logs in \eqref{psi inequality}, we find
$$\log(1-C\b) \le \log \psi_{i,j} \le \log(1+C\b)$$
for all $i,j$. Since $\log(1-c\b) \sim -c\b$ and $\log(1+c\b) \sim c\b$ as $\b \to 0$, it follows that there is a constant $C$ such that for sufficiently small $\b$ and all $i,j$,
$$-C\b \le \log \psi_{i,j}(\b) \le C\b.$$
Therefore, if $0<s<1$, then
$$
\P(\om_{i,j} \ge e^{\b^s})=\P\left(\xi_{i,j} \ge \b^{s-1}+\frac{\log \psi_{i,j}(\b)}{\b}\right)\le \P(\xi_{i,j} \ge \b^{s-1}-C)
\le Ce^{-c\b^{s-1}}.
$$
A similar argument works to show that $\P(\om_{i,j} \le e^{-\b^s}) \le Ce^{-c \b^{s-1}}$, and these two inequalities imply Property 4.

2. Property 1 holds provided we only consider $\b$ small enough. Using well-known formulas for the moments of the Gamma distribution, we have $\E(X^{-1})=(\theta-1)^{-1}$, so Property 2 holds as well.

Next, by the Cauchy--Schwarz inequality,
$$\E\left|\frac{\theta-1}{X}-1\right|^k=\E\left|\frac{\theta-1-X}{X}\right|^k \le (\E(\theta-1-X)^{2k})^{1/2}\left(\E\left(\frac{1}{X^{2k}}\right)\right)^{1/2}.$$

The second expectation is
$$\left(\E\left(\frac{1}{X^{2k}}\right)\right)^{1/2}=\left(\frac{1}{(\theta-1) \cdots (\theta-2k)}\right)^{1/2}\le \frac{C_k}{\theta^k}.$$

Let $X_t$ be a Gamma process, i.e $X_t$ is a L\'evy process with independent increments given by $X_t-X_s \sim \text{Gamma}(t-s,1)$. Then $t-X_t$ is a square-integrable martingale, with quadratic variation $[X]_t=t$, so by the Burkholder--Davis--Gundy inequality (\cite[Theorem 4.4.20]{app04}),
$$(\E(\theta-1-X)^{2k})^{1/2}\le ((\E(\theta-X)^{2k})^{1/k}+1)^{k/2} \le C_k \E([X]_{\theta}^k)^{1/2}=C_k \theta^{k/2}.$$

Combining these 2 inequalities, we obtain Property 3:
$$\E\left|\frac{\theta-1}{X}-1\right|^k \le C_k \theta^{-k/2} \le C_k \b^k.$$

Finally, to verify Property 4, we use Chernoff's inequality. First, we have, for $\lambda>1$ and $r>0$,
\begin{eqnarray*}
\P\left(\frac{\theta-1}{X} \le \lambda^{-1} \right)&=&\P(X \ge (\theta-1)\lambda)\le \E(e^{rX}) e^{-r(\theta-1)\lambda}\\
&=&(1-r)^{-\theta}e^{-r(\theta-1)\lambda}=e^{-\theta \log(1-r)-r(\theta-1)\lambda}.
\end{eqnarray*}
Minimizing the last expression over $r$, we find $r=1-\theta/(\lambda (\theta-1))$, and substituting this back above yields
$$\P\left(\frac{\theta-1}{X} \le \lambda^{-1} \right) \le \exp \left( \theta(1-\lambda+\log \lambda)-\theta \log \frac{\theta}{\theta-1}+\lambda\right)$$
Now we take $\lambda=e^{\b^s}$ for some $0<s<1$ and we find that for all sufficiently small $\b>0$,
$$\P\left(\frac{\theta-1}{X} \le e^{-\b^s} \right)\le Ce^{-C\b^{2(s-1)}}.$$

Second, for $\lambda>1$ and $r>0$,
\begin{eqnarray*}
\P\left(\frac{\theta-1}{X} \ge \lambda \right)&=&\P(-X \ge -(\theta-1)\lambda^{-1})\le \E(e^{-rX}) e^{r(\theta-1)\lambda^{-1}}\\
&=&(1+r)^{-\theta}e^{r(\theta-1)\lambda^{-1}}=e^{-\theta \log(1+r)+r(\theta-1)\lambda^{-1}}.
\end{eqnarray*}
The minimum this time is achieved at $r=-1+\theta \lambda/(\theta-1)$, and substituting above gives
$$\P\left(\frac{\theta-1}{X} \ge \lambda \right) \le \exp \left( \theta(1-\lambda^{-1}+\log \lambda^{-1})-\theta \log \frac{\theta}{\theta-1}+\lambda^{-1}\right).$$
Taking $\lambda=e^{\b^s}$ for some $0<s<1$ again, we get that for every sufficiently small $\b>0$,
$$\P\left(\frac{\theta-1}{X} \ge e^{\b^s} \right) \le Ce^{-C\b^{2(s-1)}}.$$
So

$$\P\left(e^{-\b^s} \le \frac{\theta-1}{X} \le e^{\b^s}\right) \ge 1-Ce^{-C\b^{2(s-1)}},$$
and this is easily seen to imply Property 4.
\end{proof}
All that remains now to prove Theorem \ref{TW thm} is to show how to adjust the parameters so as to make the directed polymer match first and second moments with the log-gamma polymer, and then we can apply Theorem \ref{perturbation thm}. We can rewrite the free energy of the log-gamma polymer as 
$$\log \left( \sum_{\pi} \prod_{i=0}^{2n} \frac{1}{X_{i,\pi(i)}} \right)=\log \left( \sum_{\pi} \prod_{i=0}^{2n} \frac{\theta-1}{X_{i,\pi(i)}} \right)-2n\log(\theta-1).$$
Now $\log(\theta-1)$ has the asymptotics
$$\log(\theta-1)=\log\theta-\frac{1}{\theta}-\frac{1}{2\theta^2}+O\left(\frac{1}{\theta^3}\right)$$
as $\theta \to \infty$, so plugging all this back in \eqref{log-gamma limit 2}, we find
\begin{equation}\label{log-gamma limit 3}
\theta^{2/3}\left(\frac{F_{n,\theta}+2n(-\log 2+\frac{1}{3\theta^2})}{4^{1/3}n^{1/3}}\right) \xrightarrow{d} TW_{GUE},
\end{equation}
where $F_{n,\theta}$ is the normalized free energy of the log-gamma polymer
$$F_{n,\theta}=\log \left( \sum_{\pi} \prod_{i=0}^{2n} \frac{\theta-1}{X_{i,\pi(i)}} \right).$$
As before, the $O(1/\theta^3)$ error term can be thrown out when $\alpha>1/5$.

The second moment of the set of weights $\omij(\b)=(\theta-1)/X_{i,j}$ is
$$\E\left(\frac{\theta-1}{X_{i,j}}\right)^2=\frac{\theta-1}{\theta-2},$$
and for the weights $\omij'(\b)=e^{\b \xi_{i,j}}/\psi(\b)$,
$$\E\left(\frac{e^{\b \xi_{i,j}}}{\psi(\b)}\right)^2=\frac{\psi(2\b)}{\psi(\b)^2}.$$
Solving
$$\frac{\theta-1}{\theta-2}=\frac{\psi(2\b)}{\psi(\b)^2}$$
yields
$$\theta=2+\frac{\psi(\b)^2}{\psi(2\b)-\psi(\b)^2},$$
and so with this choice of $\theta$, the two sets of weights have the same first and second moments. Note that by expanding $\psi(\b)$ and $\psi(2\b)$ as Taylor series, we can see after some calculations that
\begin{equation}\label{theta expansion}
\theta=2+\frac{1+2\E(\xi)\b+O(\b^2)}{\text{Var}(\xi)\b^2+O(\b^3)},
\end{equation}
and so $\theta \sim 1/\sigma^2 \b^2$ as $\b \to 0$, where $\sigma^2$ is the variance of the $\xi_{i,j}$'s. We are thus in the context of Krishnan and Quastel's theorem, and therefore \eqref{log-gamma limit}, \eqref{log-gamma limit 2} and \eqref{log-gamma limit 3} all hold. By Theorem \ref{perturbation thm}, the free energies for the $\omij(\b)$ and $\omij'(\b)$ weights then both satisfy \eqref{log-gamma limit 3}. By \eqref{theta expansion},
$$\frac{1}{\theta^2}=\left(\frac{\text{Var}(\xi)\b^2+O(\b^3)}{1+2\E(\xi)\b+O(\b^2)}\right)^2=\text{Var}(\xi)^2 \b^4+O(\b^5).$$
Substituting this and $\theta \sim 1/\sigma^2 \b^2$ in \eqref{log-gamma limit 3} and factoring out the moment generating functions from the normalized free energy gives
$$\frac{\log Z_{n,\b}-2n(\log \psi(\b)+\log 2-\frac{\sigma^4 \b^4}{3})}{(4\sigma^4\b^4n)^{1/3}} \xrightarrow{d} TW_{GUE}$$
(once again we can throw out the $O(\b^5)$ error term using the fact that $\b=n^{-\alpha}$ for some $\alpha>1/5$). This concludes the proof of Theorem \ref{TW thm}!

\bibliographystyle{imsart-number} 
\bibliography{bibliography_polymers.bib}       


\end{document}